\numberwithin{equation}{section}
\DeclareMathOperator{\E}{\mathbb{E}}
\DeclareMathOperator*{\diag}{diag}
\DeclareMathOperator*{\Span}{span}
\DeclareMathOperator{\Det}{det}
\DeclareMathOperator{\HS}{HS}
\DeclareMathOperator{\sign}{sign}
\DeclareMathOperator{\Row}{Row}
\DeclareMathOperator{\Col}{Col}
\DeclareMathOperator{\Ker}{Ker}
\DeclareMathOperator{\dist}{dist}
\DeclareMathOperator{\rank}{rank}
\DeclareMathOperator{\Sparse}{Sparse}
\DeclareMathOperator{\Comp}{Comp}
\DeclareMathOperator{\Incomp}{Incomp}
\def \N {\mathbb{N}}
\def \P {\mathbb{P}}
\def \R {\mathbb{R}}
\def \Z {\mathbb{Z}}
\def \EE {\mathcal{E}}
\def \LL {\mathcal{L}}
\def \MM {\mathcal{M}}
\def \NN {\mathcal{N}}
\def \VV {\mathcal{V}}
\def \a {\alpha}
\def \e {\varepsilon}
\def \d {\delta}
\newcommand{\etc}{,\ldots,}
\newcommand{\norm}[1]{\left \| #1 \right \|}
\newtheorem{theorem}{Theorem}[section]
\newtheorem{proposition}[theorem]{Proposition}
\newtheorem{corollary}[theorem]{Corollary}
\newtheorem{lemma}[theorem]{Lemma}
\newtheorem{definition}[theorem]{Definition}
\theoremstyle{remark}
\newtheorem{remark}[theorem]{Remark}
\title[A large deviation inequality for the rank of a random matrix]{A large deviation inequality for the rank of a random matrix}
\author{Mark Rudelson}
\address{Department of Mathematics, University of Michigan, 530 Church St., Ann Arbor, MI 48109, U.S.A.}
\email{rudelson@umich.edu}
\thanks{Research supported in part by NSF grant  DMS 2054408 and a fellowship from the Simons Foundation.}
\date{\today}
\subjclass[2000]{60B20}
\begin{document}
	
	\begin{abstract}
		Let $A$ be an $n \times n$ random matrix with independent identically distributed non-constant  subgaussian entries.Then for any $k \le c \sqrt{n}$,
		\[
		\rank(A) \ge n-k
		\]
		with probability at least $1-\exp(-c'kn)$.
	\end{abstract}
	
	\maketitle

\section{Introduction} \label{sec: Introduction}

 Estimating the probability that an $n \times n$ random matrix with independent identically distributed (i.i.d.) entries is singular is a classical problem in probability. 
 The first result in this direction showing that for a matrix with Bernoulli$(1/2)$ entries, this probability is $O(n^{-1/2})$ was proved by Koml\'os \cite{Komlos} in 1967. 
 In a breakthrough paper \cite{KKS}, Kahn, Koml\'os, and Szemer\'edi established the first exponential bound for Bernoulli matrices: 
 \[
   \P(\det(A_n)=0)=(0.998+o(1))^n.
 \]
 The asymptotically optimal exponent has been recently obtained by Tikhomirov \cite{Tikh}:
 \[
  \P(\det(A_n)=0)=\left(\frac{1}{2}+o(1)\right)^n.
 \] 
  The exponential bound for probability of singularity holds in a more general context than Bernoulli random matrices.
  It was proved in \cite{RV invertibility} for matrices with i.i.d. subgaussian entries and extended in \cite{RT} to matrices whose entries have bounded second moment.
  
  A natural extension of the question about the probability of singularity is estimating the probability that a random matrix has a large co-rank.
  More precisely, we are interested in the asymptotic of $\P(\rank(A_n) \le n-k)$, where $k <n$ is a number which can grow with $n$ as $n \to \infty$.
  Such rank means that there are $k$ columns of $A_n$ which are linearly dependent on the other columns.
  Based on the fact that 
  \[
    \P(\rank(A_n) \le n-1) =  \P(A_n \text{ is singular}) \le \exp(-cn),
  \]
  and the independence of the columns of $A_n$, one can predict that the probability that the rank of $A_n$ does not exceed $n-k$ is bounded by $ \big(\exp(-cn)\big)^k= \exp(-cnk)$.
  Proving such a bound amounts to obtaining a super-exponential probability estimate if $k \to \infty$ as $n \to \infty$. This makes a number of key tools in the previously mentioned papers unavailable, because these tools were intended to rule out pathological events of probability $O(\exp(-cn))$ which cannot be considered negligible in this context.
  
  The existing results fell short of this tight bound until recently. 
  Kahn, Komlos, and Szemeredi showed that the probability that a Bernoulli$(1/2)$ matrix has rank smaller than $n-k$ is $O(f(k)^n)$ where $f(k) \to 0$ as $k \to \infty$.
  The intuitive prediction above was recently confirmed by Jain, Sah and Sawney in the case when $k \in \N$ is a fixed number. 
  Building on the ideas of Tikhomirov \cite{Tikh}, they proved an optimal bound for random matrices with independent Bernoulli$(p)$ entries. 
  Namely, for any $p \in (0,1/2], \ \e>0$, and for any $n> n_0(k, p,\e)$
  \[
    \P(\rank(A_n) \le n-k) \le \left(1-p+ \e \right)^{kn}.
  \]
  This completely solves the problem for Bernoulli matrices within the exponential range.
  However, the methods of this paper do not seem to be extendable to the case when $k$ grows together with $n$, i.e., to the super-exponential range of probabilities (see Section \ref{subsec: Outline} for more details).
  
  The main result of this paper confirms this prediction in the super-exponential range for all matrices with  i.i.d. subgaussian entries.
  A random variable $\xi$ is called subgaussian if 
  \[
    \E \exp \left(-\left(\frac{\xi}{K} \right)^2 \right) < \infty
  \]
  for some $K>0$.
  In what follows, we regard $K$ as a constant and allow other constants such as $C,c, c'$, etc. depend on it.
  This is an rich class of random variables including, for instance, all bounded ones.
  
  We prove the following theorem.

 \begin{theorem} \label{thm: rank}
 	Let $k,n \in \N$ be numbers such that $k \le c n^{1/2}$.
 	Let $A$ be an $n \times n$ matrix with i.i.d. non-constant subgaussian  entries.
 	Then
 	\[
 	  \P \left( \rank(A) \le n-k \right) \le \exp (-c'k n).
 	\]
 \end{theorem}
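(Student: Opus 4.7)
My plan is to extend the incompressibility-based singularity argument of Rudelson--Vershynin from the case $k=1$ to arbitrary corank. The event $\rank(A)\le n-k$ is equivalent to the existence of a $k$-dimensional subspace $V\subset\R^n$ contained in the left null space $\Ker(A^*)$, i.e.\ $A^*P_V=0$. I would split the Grassmannian $G_{n,k}$ into \emph{compressible} subspaces --- those at Hausdorff distance at most some $\rho>0$ from a $k$-dimensional coordinate subspace $\R^I$ --- and \emph{incompressible} subspaces. In the compressible case, approximating $V\subset\Ker(A^*)$ by $\R^I$ forces the $(n-k)\times k$ submatrix $A_{I^c,I}$ to be small in operator norm; this has probability roughly $\exp(-c(n-k)k)$ by a direct tail estimate on rectangular subgaussian matrices, and the union bound over the $\binom{n}{k}\le(en/k)^k$ choices of $I$ absorbs the combinatorial factor and yields $\exp(-c'kn)$.

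The incompressible case requires an $\e$-net argument. Cover the incompressible part of $G_{n,k}$ by an $\e$-net in the operator-norm distance between the associated orthogonal projections; standard volume estimates on the Grassmannian produce a net of cardinality at most $(C/\e)^{k(n-k)}\sim(C/\e)^{kn}$. On the high-probability event $\{\|A\|\le C\sqrt n\}$, the existence of a $V\subset\Ker(A^*)$ within $\e$ of a net element $V'$ forces
\[
\|A^*P_{V'}\|_{\mathrm{op}}\le C\sqrt n\,\e.
\]
The key step is then a tensorized small-ball inequality of Halász--Esseen type: for a fixed incompressible $V'$ and an appropriate ``subspace least common denominator'' $\hat D(V')$,
\[
\P\bigl(\|A^*P_{V'}\|_{\mathrm{op}}\le t\sqrt n\bigr)\le\bigl(Ct+C/\hat D(V')\bigr)^{kn}.
\]
Running the net argument at multiple scales $\e=2^{-j}/\sqrt n$ and stratifying $V'$ by level sets of $\hat D(V')$ converts this marginal estimate into the desired $\exp(-c'kn)$ bound.

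The main technical obstacle is the vector-valued anti-concentration and the subspace LCD: one must define $\hat D(V)$ so that, first, the inequality above holds --- which means controlling the Fourier transform of the law of the $\R^k$-valued projection $P_V A_j$ on lattice-like subsets of $\R^k$ rather than on $\R$ --- and second, $\hat D(V)$ is super-polynomially large for all incompressible $V$, uniformly over the net. Tensorization of the single-column bound to all $n$ columns must lose only a constant factor in the base for the exponent $kn$ to survive. The restriction $k\le c\sqrt n$ should enter precisely where the $kn$-scale gain from tensorization must dominate error terms of order $k^2$ arising from packing arguments on the Grassmannian and from the Frobenius-versus-operator-norm gap of $\sqrt k$ for an $n\times k$ matrix.
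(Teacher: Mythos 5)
Your proposal shares the high-level skeleton with the paper (compressible/incompressible dichotomy, a net argument, LCD-driven small-ball estimates), but two steps fail precisely in the regime where $k$ grows with $n$. First, you condition on the event $\{\|A\|\le C\sqrt n\}$. This event fails with probability of order $\exp(-cn)$, and there is no way to push that lower; once $k$ exceeds a constant, $\exp(-cn)$ dominates the target bound $\exp(-c'kn)$, so the $\e$-net step as you have set it up cannot reach the required probability scale (and in particular fails for $k=\Theta(\log n)$, the range relevant to Feige--Lellouche). The paper's Section~\ref{subsec: Outline} flags this as the first obstruction: it replaces operator-norm control by the Hilbert--Schmidt norm, which concentrates at probability $1-\exp(-cn^2)$ (Lemma~\ref{lem: HS norm}), and uses \emph{random rounding} onto a scaled integer lattice (Livshyts) so that the approximation error is governed by $\|B\|_{\HS}/\sqrt n$ rather than $\|B\|$. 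Without such a replacement, your Grassmannian net cannot produce the bound.

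Second, you want a subspace LCD $\hat D(V)$ that is ``super-polynomially large for all incompressible $V$, uniformly over the net,'' and plan to stratify the remaining $V'$ by level sets of $\hat D$. Eventually such an argument must rule out the event that $\Ker(B)$ (where $B$ is the $(n-k)\times n$ submatrix) contains an incompressible vector of small LCD --- but for Bernoulli-type entries that event has probability as large as $\exp(-cn\log n)$ (e.g.\ $(1\etc 1)\in\Ker(B)$), which is \emph{not} negligible once $k=\Omega(\log n)$. So the kernel cannot be purged of all rigid-LCD vectors. The paper's essential new idea is Lemma~\ref{lem: min config}: rather than excluding all bad vectors, it either extracts a minimal almost-orthogonal tuple of bad vectors (whose presence is then ruled out at the $\exp(-ckn)$ scale in Propositions~\ref{prop: compressible} and~\ref{prop: matrix V}) or produces a $(k/2)$-dimensional subspace of $\Ker(B)$ disjoint from the bad set, to which Lemma~\ref{lem: large LCD} applies. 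Relatedly, the paper nets over almost-orthogonal tuples (whose almost orthogonality must be preserved under random rounding) rather than over the Grassmannian; the restriction $k\le c\sqrt n$ enters through the $\|\theta\|_2\le 1/(20\sqrt l)$ clause in Lemma~\ref{lem: min config} and Remark~\ref{rem: nu-orthogonal}, not through $k^2$-sized Grassmannian packing errors as you guess.
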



\begin{remark}
	The bound of Theorem \ref{thm: rank} should hold for $k > c n^{1/2}$ as well. The restriction on $k$ in the theorem arises from using $\nu$-almost orthogonal systems throughout its proof, see Definition \ref{def: nu-orthogonal} and Remark \ref{rem: nu-orthogonal}.
\end{remark}
 
 \begin{remark}
 	Combining the technique of this paper with that of Nguyen \cite{Nguyen}, one can also obtain a lower bound for the singular value $s_{n-k}(A_n)$ of the same type as in \cite{Nguyen} but with the additive error term $\exp(-ckn)$ instead of $\exp(-cn)$. 
 	We will not pursue this route in order to keep the presentation relatively simple.
 \end{remark}

  The importance of getting the large deviation bound of Theorem \ref{thm: rank} in the regime when $k$ grows simultaneously with $n$ stems in particular from its application to   \emph{ Quantitative Group Testing} (QGT).
   This computer science problem considers a collection of $n$ items containing $k$ defective ones, where $k < n$ is regarded as a known number. 
   A test consists of selecting a random pool of items choosing each one independently with probability $1/2$ and outputting the number of defective items in the pool.
   The aim of the  QGT is to efficiently determine the defective items after a small number of tests. 
   The question of constructing an efficient algorithm for QGT is still open.
   In \cite{FL}, Feige and Lellouche introduced the following relaxation of the QGT: after $m>k$ tests, one has to produce a subset $S \subset [n]$ of cardinality $m$, containing all defective items. 
   This means that unlike the original QGT, the approach of Feige and Lellouche allows false positives which makes the problem simpler and admits more efficient algorithms. 
   Denote by $A$ the $m \times n$ matrix whose rows are the indicator functions of the tests, and denote by $A|_S$ its submatrix with columns from the set $S \subset [n]$.
   Then $A$ is a random matrix with i.i.d. Bernoulli entries.
   The main result of \cite{FL} asserts that if an algorithm for the relaxed problem succeeds and outputs a set $S \subset [n]$ and $\rank(A|_S) \ge m - O(\log n)$, then one can efficiently determine the set of defective items.
   Checking this criterion for a given algorithm is difficult since the set $S$ is not known in advance.
   However, if we know that 
    \begin{equation} \label{eq: min-rank}
   	\rank(A|_S) \ge m - O(\log n)
   \end{equation}
    for all $m$-element sets $S \subset [n]$ at the same time, this condition would be redundant, and all algorithms for the relaxed problem could be adapted to solve the QGT. 
   In other words, we need to estimate the minimal rank of all $m \times m$ submatrices of an $m \times n$ random matrix.
   We show below that Theorem~\ref{thm: rank} implies that the bound \eqref{eq: min-rank} holds with high probability, and moreover, that this is an optimal estimate  (see Lemma \ref{lem: submatrices}).

\section{Notation and the outline of the proof} \label{sec: Notation and Outline}
 \subsection{Notation} \label{subsec: notation}
 We denote by $[n]$ the set of natural numbers from $1$ to $n$.
 Given a vector $x \in \R^n$, we denote by $\norm{x}_2$ its standard Euclidean norm: $\norm{x}_2= \left(\sum_{j \in [n]} x_j^2 \right)^{1/2}$.
 The unit sphere of $\R^n$ is denoted by $S^{n-1}$.
 
 If $V$ is an $m \times l$ matrix, we denote by $\Row_i(V)$ its $i$-th row and by $\Col_{j}(V)$ its $j$-th column.
 Its singular values will be denoted by
 \[
   s_1(V) \ge s_2(V) \ge \cdots \ge s_m(V) \ge 0.
 \]
 The operator norm of $V$ is defined as 
 \[
   \norm{V}=\max_{x \in S^{l-1}} \norm{V x}_2,
 \]
 and the Hilbert-Schmidt norm as
 \[
 \norm{V}_{\HS}= \left(\sum_{i=1}^m \sum_{j=1}^l v_{i,j}^2 \right)^{1/2}.
 \]
 Note that $\norm{V}=s_1(V)$ and $\norm{V}_{\HS}= \left(\sum_{j=1}^m s_j(V)^2\right)^{1/2}.$
 
 Throughout the paper, the letters $c, \bar{c}, C$ etc. stand for absolute constants whose values may change from line to line.

  \subsection{Outline of the proof} \label{subsec: Outline}
  
   Let $A$ be an $n \times n$ random matrix with i.i.d. entries.
   The fact that this matrix has rank at most $n-k$ means that at least $k$ of its columns are linearly dependent on the rest. 
   Assume that the $k$ last columns are linearly dependent on the other.
   As the results of \cite{RV invertibility} show, for a typical realization of the first $n-k$ columns, the probability that a given column belongs to their linear span is $O(\exp(-cn) )$. 
   Since the last $k$ columns are mutually independent and at the same time independent of the first $n-k$ ones, the probability that all $k$ columns fall into the linear span of the rest is $O \Big( (\exp(-c n ))^k \Big) = O(\exp(-cnk))$ which is the content of our main theorem.
   
   The problem with this argument, however, is in the meaning of the term ``typical''.
   It includes several requirements on the matrix with these $n-k$ rows, including that its norm is $O(\sqrt{n})$ and that its kernel contains no vector with a rigid arithmetic structure. As was shown in  \cite{RV invertibility}, all these requirements hold with probability at least $1-\exp(-cn)$ which is enough to derive that the matrix is invertible with a similar probability.
   In our case, when we aim at bounding probability by $\exp(-ckn)$ with $k$ which can tend to infinity with $n$, the events which have just exponentially small probability cannot be considered negligible any longer. 
   In particular, we are not able to assume that the operator norm of a random matrix is bounded by $O(\sqrt{n})$. 
   This is, however, the easiest of the arising problems as we will be able to use a better concentrated Hilbert-Schmidt norm instead.
   
   The problem of ruling out the arithmetic structure of the kernel turns out to be more delicate. 
   For Bernoulli$(p)$ random matrices with $0<p \le \frac{1}{2}$, Jain, Sah, and Sawney \cite{JSS rank} overcame it by replacing the approach based on the least common denominator used in \cite{RV invertibility} with a further development of the averaging method of Tikhomirov \cite{Tikh}. 
   This allowed them to prove that if $k$ is a constant, then with probability $1-4^{-kn}$, the kernel of the matrix consisting of the first $n-k$ rows either consists of vectors close to sparse (compressible), or does not contain \emph{any} vector with a problematic arithmetic structure, see \cite[Proposition 2.7]{JSS rank} whose proof follows \cite[Proposition 3.7]{JSS part II}. 
   They further derived from this fact that the probability that a random Bernoulli matrix has rank $n-k$ or smaller does not exceed
   \[
    \left(1-p+o(1)\right)^{kn}
   \]
   for any constant $k$.
   However, this approach is no longer feasible if $k$ is growing at the same time with $n$.
   Indeed, the kernel of an $(n-k) \times n$ Bernoulli random matrix contains the vector $(1 \etc 1)$ with probability $\left(c/\sqrt{n}\right)^n= \exp(-c' n \log n)$.
   It can also contain numerous other vectors of the same type with a similar probability.
   Hence the kernel of such matrix contains incompressible vectors with rigid arithmetic structure for $k = \Omega(\log n)$ which includes the range important for the question of Feige and Lellouche.
   
   Fortunately, the complete absence of vectors with a rigid arithmetic structure in the kernel is not necessary for proving a bound on the probability of a low rank.
   It is sufficient to rule out the situation where such vectors occupy a significant part of the kernel. 
   More precisely, we show that if $B$ is an $(n-k) \times n$ random matrix with i.i.d. subgaussian entries, then with probability at least $1-\exp(-ckn)$, its kernel contains a $(k/2)$-dimensional subspace free of the vectors with a rigid arithmetic structure. 
   Checking this fact is the main technical step in proving our main theorem.
   
   We outline the argument leading to it below. 
   We try to follow the geometric method developed in \cite{R square}, \cite{RV invertibility}.
   However, the aim of obtaining a super-exponential probability bound forces us to work with systems of problematic vectors instead of single ones.
   To handle such systems, we introduce a notion of an \emph{almost orthogonal} $l$-tuple of vectors in Section \ref{sec: Preliminary}. These systems are sufficiently simple to allow efficient estimates. At the same time, we show in Lemma \ref{lem: min config} that a linear subspace containing many ``bad'' vectors contains an almost orthogonal system of such vectors possessing an important minimality property. 
   
   Following the general scheme, we split the unit sphere of $\R^n$ into compressible and incompressible parts. 
   Let us introduce the respective definitions.
   \begin{definition}
   	Let $s \in [n]$ and let $\tau>0$. Define the set of $s$-sparse vectors by
   	\[
   	 \Sparse(s) = \{x \in \R^n: \ |\text{\rm supp}(x)| \le s\}
   	\]
   	and the sets of compressible and incompressible vectors by 
   	\begin{align*}
   	 \Comp(s,\tau)
   	 &=\{x \in S^{n-1}: \ \dist(x, \Sparse(s)) \le \tau \}, \\
   	  \Incomp(s,\tau)
   	  &= S^{n-1} \setminus \Comp(s,\tau).
   	\end{align*}
   \end{definition}
    Note that we define the sparse vectors in $\R^n$ and not in $S^{n-1}$. 
    This is not important but allows to shorten some future calculations.

   In Section \ref{sec: Compressible}, we show that the probability that the kernel of the matrix $B=(A_{[n-k] \times [n]})^\top$ contains an almost orthogonal system of $k/4$ compressible vectors is negligible.
   This is done by using a net argument, i.e., by approximating vectors from our system by vectors from a certain net. 
   The net will be a part of a scaled integer lattice, and the approximation will be performed by \emph{random rounding}, a technique widely used in computer science and introduced in random matrix theory by Livshyts \cite{Liv}.
   Let $B$ be a random matrix.
   The general net argument relies on obtaining a uniform lower bound for $\norm{By}_2$ over all points $y$ in the net and approximating a given point $x$ by the points of the net. 
   In this case, one can use the triangle inequality to obtain
   \[
     \norm{Bx}_2 \ge \norm{By}_2 - \norm{B} \cdot \norm{x-y}_2.
   \]
   This approach runs into problems in the absence of a good control of $\norm{B}$.
   However, if the net is constructed  a part of a scaled integer lattice, then one can choose the approximating point $y$ as a random vertex of the cubic cell containing $x$.
   This essentially allows to replace $\norm{B}$ in the approximation above by a more stable quantity $\norm{B}_{\HS}/\sqrt{n}$.
   Moreover, this replacement will be possible for a randomly chosen $y$ with probability close to $1$.

   In our case, we have to approximate the entire system of vectors while preserving the almost orthogonality property. 
   This makes the situation more delicate, and we can only prove that this approximation succeeds with probability which is exponentially small in $k$. 
   Fortunately, this is enough since we need just one approximation, so any positive probability is sufficient.
   
   In Section \ref{sec: Incompressible}, we assume that the kernel of $B$ contains a subspace of dimension $(3/4)k$ consisting of incompressible vectors and prove that with high probability, this subspace contains a further one of dimension $k/2$ which has no vectors with a rigid arithmetic structure. 
   The arithmetic structure is measured in terms of the \emph{least common denominator} (LCD) which is defined in Section \ref{subsec: LCD}.
   To this end, we consider a minimal almost orthogonal system of $k/4$ vectors having sub-exponential LCD-s and show that the presence of such system in the kernel is unlikely using the net argument and random rounding. 
   This is more involved than the case of compressible vectors since the magnitude of the LCD varies from $O(\sqrt{n})$ to the exponential level, and thus requires approximation on different scales. 
   To implement it, we decompose the set of such systems according to the magnitudes of the LCD-s and 
   then we scale each system by the sequence of its LCD-s.
    Because of the multiplicity of scales, the approximation has to satisfy a number of conditions at once. 
    At this step we also rely on random rounding allowing to check all the required conditions probabilistically.
   Verification that all of them can be satisfied simultaneously, although with an exponentially small probability performed in the proof in Lemma \ref{lem: approx} is the most technical part of the argument. 
   
   Finally, in Section \ref{sec: Rank}, we  collect all the ingredients and finish the proof of Theorem \ref{thm: rank}.

\section{Preliminary results} \label{sec: Preliminary}
 \subsection{Almost orthogonal systems of vectors}
 We will have to control the arithmetic structure of the subspace spanned by $n-k$ columns of the matrix $A$ throughout the proof.
 This structure is defined by the presence of vectors which are close to the integer lattice.
 To be able to estimate the probability that many such vectors lie in the subspace, we will consider special configurations of almost orthogonal vectors which are easier to analyze.
 This leads us to the following definition.
 \begin{definition} \label{def: nu-orthogonal}
 	Let $\nu \in (0,1)$.
 	An $l$-tuple of vectors $(v_1 \etc v_l) \subset \R^n \setminus \{0\}$ is called $\nu$-almost orthogonal if the $n \times l$ matrix $W$ with columns $\left(\frac{v_1}{\norm{v_1}_2} \etc \frac{v_l}{\norm{v_l}_2}  \right)$ satisfies
 	\[
 	1-\nu \le s_l(W) \le s_1(W) \le 1+\nu.
 	\]
 \end{definition}

Estimating the largest and especially the smallest singular values of a general deterministic matrix is a delicate task.
We employ a very crude criterion below.
 
 \begin{lemma}  \label{lem: almost ort}
 	Let $\nu \in [0, \frac{1}{4}]$ and let $(v_1 \etc v_l) \subset \R^n  \setminus \{0\}$ be a an $l$-tuple such that 
 	\[
 	 \norm{P_{\Span (v_1 \etc v_j)} v_{j+1}}_2
 	 \le \frac{\nu}{\sqrt{l}} \norm{v_{j+1}}_2
 	 \quad \text{for all } j \in [l-1].
 	\]
 	Then $(v_1 \etc v_l) \subset \R^l$ is a $(2\nu)$-almost orthogonal system. 
 	Moreover, if  $V$ is the $n \times l$ matrix with columns $v_1 \etc v_l$,
 	then 
 	\[
 	\Det^{1/2} (V^\top V) \ge 2^{-l} \prod_{j=1}^l \norm{v_j}_2.
 	\]
 \end{lemma}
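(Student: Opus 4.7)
The plan is to perform the Gram--Schmidt orthogonalization on $(v_1,\dots,v_l)$ and read off both conclusions from the resulting $QR$-type decomposition.

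First, I let $u_j := v_j - P_{\Span(v_1,\dots,v_{j-1})}v_j$ be the Gram--Schmidt residuals and $q_j := u_j/\|u_j\|_2$. The hypothesis immediately gives
\[
\|u_j\|_2^2 = \|v_j\|_2^2 - \|P_{\Span(v_1,\dots,v_{j-1})}v_j\|_2^2 \;\ge\; (1-\nu^2/l)\,\|v_j\|_2^2,
\]
so each $u_j$ retains nearly the full length of $v_j$. Writing $V = QR$ with $Q=[q_1|\cdots|q_l]$ (orthonormal columns) and $R$ the upper triangular matrix whose $(j,j)$ entry is $\|u_j\|_2$ and whose $(i,j)$ entry for $i<j$ is $\langle v_j,q_i\rangle$, one has $V^\top V = R^\top R$ and so
\[
\Det^{1/2}(V^\top V) = \prod_{j=1}^l \|u_j\|_2 \;\ge\; (1-\nu^2/l)^{l/2}\prod_{j=1}^l \|v_j\|_2.
\]
For $\nu\le 1/4$ one has $1-\nu^2/l\ge 1/4$, so $(1-\nu^2/l)^{l/2}\ge 2^{-l}$, giving the determinant bound (in fact much more).

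For the singular values, normalize: $w_j = v_j/\|v_j\|_2$ gives $W = Q\tilde R$ with the same $Q$ and $\tilde R := R\,\diag(\|v_1\|_2^{-1},\dots,\|v_l\|_2^{-1})$. Since $Q$ has orthonormal columns, $W^\top W = \tilde R^\top \tilde R$, so $s_j(W)=s_j(\tilde R)$ for all $j$. Split $\tilde R = D+N$ where $D$ is the diagonal part with $D_{jj}=\|u_j\|_2/\|v_j\|_2\in[\sqrt{1-\nu^2/l},\,1]$, so $\|D-I\|\le 1-\sqrt{1-\nu^2/l}\le \nu^2/l\le \nu^2$. The strictly upper triangular part $N$ has column sums
\[
\sum_{i<j}\tilde R_{ij}^2 = \frac{\|P_{\Span(v_1,\dots,v_{j-1})}v_j\|_2^2}{\|v_j\|_2^2} \;\le\; \frac{\nu^2}{l},
\]
so $\|N\|\le\|N\|_{\HS}\le\sqrt{\sum_j \nu^2/l}=\nu$. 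Therefore $\|\tilde R-I\|\le \nu^2+\nu\le 2\nu$ (using $\nu\le 1/4\le 1$), from which $s_1(W)\le 1+2\nu$ and $s_l(W)\ge 1-2\nu$ follow.

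I do not expect a real obstacle here: the only place one has to be moderately careful is in controlling $N$ in operator norm. The trick is to forego any per-entry bound and exploit instead the fact that the \emph{squared} projection bound in the hypothesis sums to $\nu^2$ across columns, which makes $\|N\|_{\HS}\le\nu$ and hence $\|N\|\le\nu$. Both conclusions of the lemma then fall out by the triangle inequality.
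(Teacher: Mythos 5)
Your proof is correct and follows essentially the same route as the paper's. Both arguments put $V$ (after normalizing columns) into upper-triangular form via Gram--Schmidt, observe that the diagonal entries lie in $[\sqrt{1-\nu^2/l},1]$ and that the strictly upper-triangular part has Hilbert--Schmidt norm at most $\nu$, and conclude by the triangle inequality that the singular values lie in $[1-2\nu,1+2\nu]$; the only cosmetic difference is that you prove the determinant bound directly from $\Det^{1/2}(V^\top V)=\prod_j\|u_j\|_2$, while the paper derives it as a corollary of the singular-value estimate.
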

 \begin{proof}
   Construct an orthonormal system in $\R^n$ by setting
   \[
     e_1= \frac{v_1}{\norm{v_1}_2},
     \qquad e_{j+1}= \frac{P_{(\Span (v_1 \etc v_j))^{\perp}} v_{j+1}}{\norm{P_{(\Span (v_1 \etc v_j))^\perp} v_{j+1}}_2} \quad \text{for all } j \in [l-1]
  \]
  and complete it to an orthonormal basis. The $n \times l$ matrix $W$ with columns $\Col_j(W)=\frac{v_j}{\norm{v_j}_2}$ written in this basis has the form $W= \begin{bmatrix}
  	\bar{W} \\0
  \end{bmatrix}$, where $W_u$ is an $l \times l$ upper triangular matrix. 
 	The assumption of the lemma yields
 	\[
      \left(  \sum_{i=1}^{j-1} \bar{W}_{i,j}^2 \right)^{1/2}
       =
       \norm{P_{\Span (v_1 \etc v_{j-1})} \Col_{j}(\bar{W})}_2 \le \frac{\nu}{\sqrt{l}} \quad \text{for all } j \in \{2 \etc l\}. 
   \] 	
 	and so,
   \[
     \sqrt{1-\frac{\nu^2}{l}} \le \bar{W}_{j,j} \le 1 \quad \text{for all } j \in [l],
   \]
   since $\norm{\Col_{j}(\bar{W})}_2=1$.
 	Therefore,
 	\[
 	  \norm{\bar{W}- \diag(\bar{W})}
 	  \le \norm{\bar{W}- \diag(\bar{W})}_{\HS}
 	  = \left( \sum_{j=1}^l \sum_{i<j} \bar{W}_{i,j}^2 \right)^{1/2}
 	  \le \nu,
 	\]
 	and thus
 	\begin{align*}
 	1-2\nu &\le 1 - \norm{I_l-\diag(\bar{W})} - \norm{\diag(\bar{W})-\bar{W}} \\
 	&\le s_l(\bar{W}) \le s_1(\bar{W}) \\
 	&\le 1 +  \norm{I_l-\diag(\bar{W})} + \norm{\diag(\bar{W})-\bar{W}} \\
 	&\le 1+ 2\nu.
 	\end{align*}
 	This implies the first claim of the lemma.
 	The second claim  immediately follows from the first one.
 \end{proof}

 The next lemma shows that if $W \subset \R^n \setminus \{0\}$ is a closed set and $E \subset \R^n$ is a linear subspace, then we can find a large almost orthogonal system in $E \cap W$ having a certain minimality property or a further linear subspace $F \subset E$ of a large dimension disjoint from $W$.
 This minimality property will be a key to estimating the least common denominator below.

\begin{lemma}[Almost orthogonal system] \label{lem: min config}
	There exists a constant $c>0$ for which one of the following holds.
	Let  $W \subset \R^n \setminus \{0\}$ be a closed set.  Let $l < k \le n$, and let $E \subset \R^n$ be a linear subspace of dimension $k$.
	Then at least one of the following holds.
	\begin{enumerate}
		\item \label{it: ort system} There exist vectors $v_1 \etc v_l \in E \cap W$ such that
		\begin{enumerate}
			\item \label{it: 13} The $l$-tuple $(v_1 \etc v_l)$ is $\left(\frac{1}{8}\right)$-almost orthogonal;
			\item \label{it: 14} For any $\theta \in \R^l$ such that $ \norm{\theta}_2 \le \frac{1}{20 \sqrt{l}}$ 
			\[
			\sum_{i=1}^{l} \theta_i v_i \notin W.
			\]  			
		\end{enumerate}
		\item \label{it: subspace} There exists a subspace $F \subset E$ of dimension $k-l$ such that  $F \cap W = \varnothing$.
	\end{enumerate}
\end{lemma}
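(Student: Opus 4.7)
The plan is to construct the tuple $(v_1, \ldots, v_l)$ greedily by a minimum-norm procedure, and to fall back to case (2) whenever the procedure cannot proceed. Set $V_0 = \{0\}$, and inductively on $j = 1, \ldots, l$, introduce the cone
\[
W_j = \bigl\{v \in W \cap E : \norm{P_{V_{j-1}} v}_2 \le \tfrac{1}{16\sqrt{l}} \norm{v}_2\bigr\}.
\]
If $W_j = \varnothing$ at some step $j$, then any nonzero $v \in V_{j-1}^\perp \cap E$ satisfies $\norm{P_{V_{j-1}} v}_2 = 0$ and would lie in $W_j$ were it in $W$; hence $V_{j-1}^\perp \cap E$ is disjoint from $W$. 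Since $\dim(V_{j-1}^\perp \cap E) \ge k - (j-1) \ge k - l + 1$, picking any $(k-l)$-dimensional subspace $F$ of it yields case (2). Otherwise, each $W_j$ is non-empty; as the closed set $W$ is bounded away from the origin, the cone $W_j$ is closed and bounded away from 0, and a minimum-norm element $v_j \in W_j$ exists by standard compactness.

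With this construction, property (1a) is immediate from Lemma \ref{lem: almost ort} applied with $\nu = 1/16$, since the cone condition $\norm{P_{V_{j-1}} v_j}_2 \le \tfrac{1}{16\sqrt{l}} \norm{v_j}_2$ is precisely its hypothesis; it yields $(1/8)$-almost-orthogonality of the tuple. A key property of the construction is the nesting of cones: for $i \ge j^*$ one has $V_{j^*-1} \subset V_{i-1}$, hence $W_i \subset W_{j^*}$, which forces $\norm{v_{j^*}}_2 \le \norm{v_i}_2$.

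For property (1b), I argue by contradiction. Suppose $w = \sum_{i=1}^l \theta_i v_i \in W$ for some $\theta \in \R^l$ with $\norm{\theta}_2 \le \tfrac{1}{20\sqrt{l}}$, and let $j^* = \min\{i : \theta_i \ne 0\}$, so $w \in \Span(v_{j^*}, \ldots, v_l)$. Using the $(1/8)$-almost-orthogonality estimate $\norm{w}_2 \le (9/8)\,\norm{\theta}_2 \max_{i \ge j^*} \norm{v_i}_2$ together with the bound on $\norm{\theta}_2$, I compare $\norm{w}_2$ with $\norm{v_{j^*}}_2$. Next, exploiting the cone inequalities $\norm{P_{V_{i-1}} v_i}_2 \le \tfrac{1}{16\sqrt{l}} \norm{v_i}_2$ for $i \ge j^*$, the nesting $V_{j^*-1} \subset V_{i-1}$, and Cauchy--Schwarz on $\sum_i |\theta_i|\norm{v_i}_2$, I bound $\norm{P_{V_{j^*-1}} w}_2$ relative to $\norm{w}_2$, with the goal of placing $w$ in the cone $W_{j^*}$. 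The two facts $\norm{w}_2 < \norm{v_{j^*}}_2$ and $w \in W_{j^*}$ together would contradict the minimality of $v_{j^*}$ in $W_{j^*}$.

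The principal obstacle is the Cauchy--Schwarz bookkeeping in the verification of (1b): a naive estimate yields $\norm{P_{V_{j^*-1}} w}_2 / \norm{w}_2 \lesssim \norm{v_l}_2 /(14\,\norm{v_{j^*}}_2)$, which is compatible with the cone constant $1/(16\sqrt{l})$ only when the ratio $\norm{v_l}_2/\norm{v_{j^*}}_2$ is of order at most $\sqrt{l}$. A pure stepwise greedy minimization does not automatically give such control; the argument therefore needs either a more refined choice of $j^*$ (for instance, the index balancing $|\theta_{j^*}|$ against $\norm{v_{j^*}}_2$ rather than simply the smallest index with nonzero $\theta_{j^*}$), or a tuple-wise minimization of a determinantal quantity such as $\det(V^\top V)^{1/2}$ combined with a replacement argument showing that any violation of (1b) could be used to produce an AO tuple of strictly smaller determinant. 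Calibrating the explicit constants $1/8$, $1/16$, and $1/20$ so that this bookkeeping just closes is the most delicate step.
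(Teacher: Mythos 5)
Your greedy construction is essentially the same as the paper's, but there is a decisive difference in how the selection set is defined, and this is exactly where your argument fails to close. You define the cone
\[
W_j=\bigl\{v\in W\cap E:\ \norm{P_{V_{j-1}}v}_2\le \tfrac{1}{16\sqrt l}\,\norm{v}_2\bigr\},
\]
thresholding by the \emph{candidate's own} norm. The paper instead uses the set
\[
H_j=\bigl\{v\in E\cap W:\ \norm{P_{\Span(v_1,\ldots,v_j)}v}_2\le \tfrac{1}{16\sqrt l}\,\norm{v_j}_2\bigr\},
\]
thresholding by the \emph{last constructed vector's} norm, so that $H_j$ is a cylinder of fixed radius rather than a cone. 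Your cone gives you an easy nesting $W_{j}\subset W_{j-1}$ and hence monotonicity of the norms, and part (a) then follows directly from Lemma~\ref{lem: almost ort}; so far so good. But to obtain part (b) from the cone you must show that the violating vector $w=\sum\theta_i v_i$ lies in $W_{j^*}$, which means comparing $\norm{P_{V_{j^*-1}}w}_2$ against $\tfrac{1}{16\sqrt l}\norm{w}_2$. Both quantities live on the scale of the tuple, and you correctly observe that the resulting constraint involves the ratio $\norm{v_l}_2/\norm{v_{j^*}}_2$, which the greedy construction does not control; the argument therefore does not close. This is not a bookkeeping issue that a sharper Cauchy--Schwarz can repair.

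The fix the paper uses is twofold, and your closing speculations circle it without quite landing on it. First, with the fixed-radius set $H_j$, showing $w\in H_{l-1}$ only requires
\[
\norm{P_{\Span(v_1,\ldots,v_{l-1})}w}_2\le\Bigl(\tfrac98+\tfrac{1}{16\sqrt l}\Bigr)\norm{\theta}_2\,\norm{v_{l-1}}_2<\tfrac{1}{16\sqrt l}\norm{v_{l-1}}_2,
\]
a comparison against a fixed number rather than against $\norm{w}_2$, and the constants close precisely because $\norm{\theta}_2\le\tfrac{1}{20\sqrt l}$ and $\norm{V^{l-1}}\le\tfrac98\norm{v_{l-1}}_2$. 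Second, the contradiction is derived at the \emph{last} index rather than the first nonzero one: minimality of $v_l$ in $H_{l-1}$ forces $\norm{w}_2\ge\norm{v_l}_2$, while almost-orthogonality gives $\norm{w}_2\le\tfrac98\norm{v_l}_2\norm{\theta}_2<\norm{v_l}_2$. Working at the top of the chain is what makes the upper and lower bounds on $\norm{w}_2$ incompatible regardless of the spread of the $\norm{v_i}_2$. If you replace your cone $W_j$ by the cylinder $H_j$ and derive the contradiction from minimality of $v_l$ in $H_{l-1}$ rather than from membership of $w$ in a cone at index $j^*$, your proof goes through and coincides with the paper's.
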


\begin{remark} \label{rem: nu-orthogonal}
	The restriction $k \le c n^{1/2}$ in the formulation of Theorem \ref{thm: rank} stems from the condition $ \norm{\theta}_2 \le \frac{1}{20 \sqrt{l}}$ in Lemma \ref{lem: min config} \eqref{it: 14} which in turn arises from using Lemma \ref{lem: almost ort} for the $l$-tuple $v_1 \etc v_l$.
\end{remark}

\begin{proof}
	Let us construct a sequence of vectors $v_1 \etc v_{l'}, \ l' \le l$ with  $ \norm{v_1}_2 \le \norm{v_2}_2 \le \cdots \le \norm{v_{l'}}_2$ by induction.
	If $E \cap W = \varnothing$,  then \eqref{it: subspace} holds for any subspace $F$ of $E$ of dimension $k-l$, so the lemma is proved.
	Assume that 
	 $E \cap W \neq \varnothing$, and define $v_1$ as the vector of this set having the smallest norm.
	
	Let $2 \le j \le l-1$.
	For convenience, denote $v_0=0$.
		Assume that $j \in [l-1]$ and the vectors $v_1 \etc v_{j}$ with  $ \norm{v_1}_2 \le \norm{v_2}_2 \le \cdots \le \norm{v_j}_2$ and such that for all   $0 \le i \le j-1$, $v_{i+1}$ is the vector of the smallest norm in $E \cap W$ for which the inequality
	\[
	\norm{P_{\Span(v_0 \etc v_i) } v_{i+1}}_2 \le \frac{1}{16 \sqrt{l}} \norm{v_i}_2  
	\]
	holds.
	Note that if $j=1$, then the condition above is vacuous, and the vector $v_1$ has been already constructed.
	Assume that $j \ge 2$, and we have found such vectors $v_1 \etc v_j$.
	Consider the set
	\[
	H_j= \{ v \in E \cap W: \  \norm{P_{\Span(v_0 \etc v_j) } v}_2 \le \frac{1}{16 \sqrt{l}} \norm{v_j}_2  \}.
	\]
	If $H_j= \varnothing$, then \eqref{it: subspace} holds for any subspace of $E \cap (\Span(v_1 \etc v_j))^\perp$ of dimension $k-l$ which proves the lemma in this case.
	Otherwise, choose a vector $v \in H_j$  having the smallest norm and denote it by $v_{j+1}$.
	By construction, $\norm{v_{j+1}}_2 \ge \norm{v_j}_2$ since otherwise it would have been chosen at one of the previous steps.
	 
	Assume that we have run this process for $l$ steps and constructed such sequence $v_1 \etc v_l$.
	Then  for any $j \in [l]$ 
	\[
	\norm{P_{\Span(v_1 \etc v_{j-1})} v_j}_2  \le \frac{1}{16 \sqrt{l}} \norm{v_{j-1}}_2 
	\le  \frac{1}{16 \sqrt{l}} \norm{v_{j}}_2,
	\]
	and Lemma \ref{lem: almost ort} ensures that \eqref{it: 13} holds. 
	Therefore, to complete the induction step, we have to check only \eqref{it: 14}.
	Assume that there exists $\theta \in \R^{j+1}$ such that $ \norm{\theta}_2 \le \frac{1}{20 \sqrt{l}}$ and 
	\begin{equation} \label{eq: dis rho}
		\sum_{i=1}^{j+1} \theta_i v_i \in W.
	\end{equation}
	Let $V^{j}$ be the $n \times j$ matrix with columns $v_1 \etc v_j$. The already verified condition \eqref{it: 13} yields $\norm{V^{j}} \le \frac{9}{8}  \max_{i \in [j]} \norm{v_i}_2 \le \frac{9}{8}  \norm{v_{j}}_2$.
	Since $v_{j+1} \in H_j$,
	\begin{align*}
		\norm{P_{\Span(v_1 \etc v_j) } \left(\sum_{i=1}^{j+1} \theta_i v_i \right)}_2 
		&\le \norm{\sum_{i=1}^j \theta_i v_i}_2 + |\theta_{j+1}| \cdot  \norm{P_{\Span(v_1 \etc v_j) } v_{j+1}}_2 \\
		&\le  \norm{V^j} \cdot \norm{\theta}_2 + \norm{\theta}_2\cdot \frac{1}{16 \sqrt{l}} \norm{v_j}_2 \\
		&\le \left(\frac{9}{8}+ \frac{1}{16 \sqrt{l}}\right) \norm{\theta}_2\cdot  \norm{v_j}_2 \\
		&< \frac{1}{16 \sqrt{l}}  \norm{v_j}_2.
	\end{align*}
    The last inequality above uses that $\norm{\theta}_2 \le \frac{1}{20 \sqrt{l}}$.
	Since by the inductive construction, $v_{j+1}$ is the vector of the smallest norm in $H_j$ having this property, $\norm{\sum_{i=1}^{j+1} \theta_i v_i }_2 \ge \norm{v_{j+1}}_2$.
	On the other hand, by  \eqref{it: 13} and Lemma \ref{lem: almost ort}, $\norm{V^{j+1}} \le \frac{9}{8}  \norm{v_{j+1}}_2$,  so
	\[
	\norm{\sum_{i=1}^{j+1} \theta_i v_i}_2 
	\le \norm{V^{j+1}} \cdot \norm{\theta}_2
	\le \frac{9}{8} \norm{v_{j+1}}_2 \cdot \norm{\theta}_2 \le \frac{1}{16 \sqrt{l}}  \norm{v_{j+1}}_2 .
	\] 	
	This contradiction shows that \eqref{eq: dis rho} is not satisfied, so \eqref{it: 14} holds.		
\end{proof}

 \subsection{Concentration and tensorization}
 We will need several elementary concentration results. 
 To formulate them, we introduce a few definitions.
	Denote by $\LL(X,t)$ the Levy concentration function of a random vector $X \in \R^m$:
\[
\LL(X,t)= \sup_{y \in \R^m} \P(\norm{X-y}_2 \le t).
\]
Let $\xi \in \R$ be a random variable.
We will call it subgaussian if $\E \exp(\lambda |\xi|^2) < \infty$ for some $\lambda >0$ and denote
\[
 \norm{\xi}_{\psi_2}:= \inf \left\{s>0: \ \E \left[ \exp \left(\frac{|\xi|}{s}\right)^2 \right] \le 2 \right\}.
\]

 For technical reasons, let us restrict the class of random entries of the matrix and introduce some parameters controlling their behavior.
 First, without loss of generality, we may assume that the entries of $A$ are centered, i.e., $\E a_{i,j}=0$. Indeed, since all entries are i.i.d., subtracting the expectation from each one results in a rank one perturbation of the matrix $A$ which does not affect the conclusion of Theorem \ref{thm: rank}.
 Second, since the entries are non-constant, $\LL(a_{i,j},t) <1$ for some $t>0$. After an appropriate scaling the entries, we can assume that $t=1$.
 Therefore, throughout the paper, we will assume that the entries of the matrix $A$ are i.i.d. copies of a random variable $\xi$ satisfying the following conditions:
\begin{equation} \label{eq: xi}
	\E \xi=0, \quad \norm{\xi}_{\psi_2} \le K, \quad \LL(\xi,1) \le 1-p.
\end{equation}
Without loss of generality, we may assume that $K \ge 1$.

 Throughout the paper we consider random matrices whose entries are independent copies of a random variable $\xi$ satisfying \eqref{eq: xi}. 
 The constants $c,C, C'$ etc. appearing in various formulas below may depend on $p$ and $\norm{\xi}_{\psi_2}$.
 
 \begin{lemma}[Operator norm] \label{lem: operator norm}
 	Let $m \le n$, and let $Q$ be an $m \times n$ matrix with centered  independent entries $q_{i,j}$ such that $|q_{i,j}| \le 1$.  
 	Then
 	\[
 	 \P ( \norm{Q} \ge C_{\ref{lem: operator norm}} \sqrt{n}) \le \exp(-c_{\ref{lem: operator norm}} n).
 	\]
 \end{lemma}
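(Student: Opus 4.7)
The plan is to use a standard $\varepsilon$-net argument on the product of spheres, together with Hoeffding's inequality for sums of bounded independent random variables.

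First I would reduce the operator norm to a quadratic form over nets. Fix a $(1/4)$-net $\mathcal{N}_n \subset S^{n-1}$ and a $(1/4)$-net $\mathcal{N}_m \subset S^{m-1}$. By a standard volumetric bound, we can arrange $|\mathcal{N}_n| \le 9^n$ and $|\mathcal{N}_m| \le 9^m \le 9^n$. A standard approximation argument then shows
\[
\norm{Q} \le 2 \max_{x \in \mathcal{N}_n,\, y \in \mathcal{N}_m} \pr{y}{Qx}.
\]

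Next I would estimate the deviation of $\pr{y}{Qx}$ for fixed unit vectors $x,y$. Writing $\pr{y}{Qx}=\sum_{i,j} y_i q_{i,j} x_j$, this is a sum of independent centered random variables, each bounded in absolute value by $|y_i x_j|$. Since $\sum_{i,j} y_i^2 x_j^2 = 1$, Hoeffding's inequality gives
\[
\P\bigl( |\pr{y}{Qx}| \ge t \bigr) \le 2 \exp(-c t^2)
\]
for an absolute constant $c > 0$.

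Finally I would combine these via the union bound. Setting $t = C\sqrt{n}/2$ for a sufficiently large absolute constant $C$,
\[
\P\!\left( \norm{Q} \ge C\sqrt{n} \right) \le |\mathcal{N}_n|\cdot|\mathcal{N}_m|\cdot 2\exp\bigl(-c C^2 n/4\bigr) \le 2\cdot 9^{2n}\exp(-c C^2 n /4) \le \exp(-c' n),
\]
provided $C$ is chosen large enough that $cC^2/4 > 2\log 9$. This yields the claimed bound.

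There is no serious obstacle here; the one point to be careful about is making sure that the Hoeffding bound is applied correctly with the variance proxy $\sum_{i,j} y_i^2 x_j^2 = \norm{x}_2^2 \norm{y}_2^2 = 1$, so that the exponent in the tail depends only on $t$ and not on $m$ or $n$, which is what makes the net argument close with an absolute constant $C$.
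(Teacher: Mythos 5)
Your proof is correct and follows the standard $\varepsilon$-net plus concentration approach; the paper itself does not prove this lemma but simply cites \cite{RV invertibility}, where the same net argument is carried out for general subgaussian entries, and your use of Hoeffding's inequality is precisely the specialization of that subgaussian concentration step to bounded entries.
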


  Lemma \ref{lem: operator norm} follows from a general norm estimate for a random matrix with centered subgaussian entries, see, e.g. \cite{RV invertibility}. 
  It is easy to see that the statement of the lemma is optimal up to constants $C,c$.
  Note that the event that $ \norm{Q} \ge C \sqrt{n}$ has probability which is exponentially small in $n$. 
  Such bound is sufficient for the application we have in mind, but is not strong enough to bound the operator norm of $A$. 
  Indeed, as our aim is to prove the bound $\exp(-ckn)$ for the probability that the rank of $A$ is smaller than $n-k$ and $k$ can be large, we cannot exclude events of probability $\exp(-cn)$.
  This forces us to consider another matrix norm which enjoys stronger concentration properties. 
  
  For a matrix with subgaussian entries, we probe a stronger bound for the Hilbert-Schmidt norm.
 
 \begin{lemma}[Hilbert-Schmidt norm]  \label{lem: HS norm}
 	Let $m \le n$ and let $A$ be an $m \times n$ matrix whose entries are independent copies of a random variable $\xi$ satisfying \eqref{eq: xi}.
 	Then
 	\[
 	  \P (\norm{A}_{\HS} \ge 2 K n)
 	  \le \exp ( - c n^2).
 	\]
 \end{lemma}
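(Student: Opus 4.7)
The plan is to obtain the Hilbert-Schmidt bound by a direct exponential moment calculation, exploiting the i.i.d.\ structure of the entries and the definition of the subgaussian norm. The key observation is that $\norm{A}_{\HS}^2$ is a sum of $mn$ independent copies of $\xi^2$, and because each summand has a finite exponential moment at level $1/K^2$, the moment generating function of $\norm{A}_{\HS}^2/K^2$ factorizes into at most $n^2$ terms each bounded by $2$, which is plenty of slack to absorb the factor $e^{4n^2}$ required by Markov.

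Concretely, I would first record that, by the definition $\norm{\xi}_{\psi_2} \le K$ in \eqref{eq: xi}, one has $\E \exp(\xi^2/K^2) \le 2$. Since the entries $a_{i,j}$ of $A$ are i.i.d.\ copies of $\xi$ and $mn \le n^2$, independence gives
\[
\E \exp\!\left( \frac{\norm{A}_{\HS}^2}{K^2} \right)
= \prod_{i=1}^m \prod_{j=1}^n \E \exp\!\left( \frac{a_{i,j}^2}{K^2} \right)
\le 2^{mn} \le 2^{n^2}.
\]
Next, I would apply Markov's inequality with threshold $t = 4n^2$. Since $\{\norm{A}_{\HS} \ge 2Kn\}$ is exactly $\{\norm{A}_{\HS}^2/K^2 \ge 4n^2\}$, this yields
\[
\P\bigl(\norm{A}_{\HS} \ge 2Kn\bigr)
\le 2^{n^2} \cdot e^{-4n^2}
= \exp\bigl( -(4 - \ln 2)\, n^2 \bigr)
\le \exp(-3 n^2),
\]
which gives the desired conclusion with $c = 3$.

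There is no serious obstacle here: the argument is a one-line Chernoff bound made trivial by the fact that the subgaussian assumption already provides an exponential moment for $\xi^2$ at the precise scale $1/K^2$, and the factor of $2$ in the threshold $2Kn$ leaves an easy margin over the entropy cost $n^2 \ln 2$ of having $n^2$ independent summands. The only minor point worth flagging is that we are using the bound $mn \le n^2$, which is where the hypothesis $m \le n$ enters, and that working directly with $\norm{A}_{\HS}$ (rather than $\norm{A}$) is exactly what allows the probability to decay as $\exp(-cn^2)$ rather than the weaker $\exp(-cn)$ available from Lemma \ref{lem: operator norm}.
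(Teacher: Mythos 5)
Your proof is correct. The paper arrives at the same conclusion by a slightly different route: it first centers the summands, observes that $Y = \xi^2 - \E\xi^2$ is sub-exponential with $\E\exp\bigl(|\xi^2 - \E\xi^2|/\norm{\xi}_{\psi_2}^2\bigr) \le 2e$, notes that $\sum_{i,j}\E a_{i,j}^2 \le K^2 n^2$, and then invokes Bernstein's inequality for sums of independent sub-exponential variables. You instead bound the raw exponential moment $\E\exp\bigl(\norm{A}_{\HS}^2/K^2\bigr)\le 2^{mn}\le 2^{n^2}$ directly from the definition of $\norm{\xi}_{\psi_2}$ and apply Markov with a fixed (unoptimized) parameter; the margin $4 - \ln 2 > 3$ is enough because the threshold $2Kn$ has slack built in. Your argument is more self-contained (no appeal to Bernstein) and makes the constant $c$ explicit, while the paper's centering approach is the one that generalizes more readily to thresholds closer to the mean $\norm{A}_{\HS}^2 \approx \E\xi^2 \cdot mn$, where an unoptimized Chernoff bound at scale $1/K^2$ would no longer suffice; for the specific threshold $2Kn$ needed here the two are interchangeable.
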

\begin{proof}
	Since $\E \xi^2 \le \norm{\xi}_{\psi_2}< \infty$,
	\[
	  \E \exp  \left( \frac{|\xi^2 - \E \xi^2|}{\norm{\xi}_{\psi_2}^2}\right)
	  \le \E \exp  \left(\frac{  \xi^2}{\norm{\xi}_{\psi_2}^2} +1\right) 
	  \le 2e,
	\]
	which shows that $Y=\xi^2 - \E \xi^2$ is a centered sub-exponential random variable.
	Taking into account that
	\[
	 \sum_{i=1}^m \sum_{j=1}^n  \E a_{i,j}^2 \le K^2 n^2
	\]
	and Bernstein's inequality \cite{V HDP}, we obtain
	\[
	 \P  (\norm{A}_{\HS} \ge 2K n)
	 =  \P  \left(\sum_{i=1}^m \sum_{j=1}^n (a_{i,j}^2 - \E a_{i,j}^2)\ge 3 K^2 n^2 \right)
	 \le \exp(-c  n^2)
	\]
	as required.
\end{proof}

 We will also need a tensorization lemma for the small ball probability similar to Lemma 2.2 \cite{RV invertibility}.
 \begin{lemma}[Tenzorization] \label{lem: tenzorization}
 	Let $m, M>0$ and let $Y_1 \etc Y_n \ge 0$ be independent random variables such that $\P (Y_j \le s) \le (Ms)^m$ for all $s \ge s_0$.
 	Then
 	\[
 	  \P \left( \sum_{j=1}^n Y_j \le n t \right)
 	  \le (C M  t)^{mn}
 	  \quad \text{for all } t \ge  s_0.
 	\]
 \end{lemma}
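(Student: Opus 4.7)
The plan is to prove this by the standard exponential Markov (Chernoff) argument, optimizing over a single parameter $\lambda>0$. Writing
\[
\P\!\left(\sum_{j=1}^n Y_j \le nt\right) = \P\!\left(e^{-\lambda \sum_j Y_j} \ge e^{-\lambda nt}\right) \le e^{\lambda nt} \prod_{j=1}^n \E e^{-\lambda Y_j},
\]
the problem reduces to bounding $\E e^{-\lambda Y_j}$ for a single $j$ in terms of $(Mt)^m$, and then choosing $\lambda$ appropriately.

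To bound $\E e^{-\lambda Y_j}$, I would use the layer-cake representation
\[
\E e^{-\lambda Y_j} = \int_0^\infty \lambda e^{-\lambda s}\, \P(Y_j \le s)\, ds,
\]
and split the integral at $s_0$. On $[s_0,\infty)$ I can apply the hypothesis directly and extend the integral back to $0$, obtaining
\[
\int_{s_0}^\infty \lambda e^{-\lambda s} (Ms)^m\, ds \le \int_0^\infty \lambda e^{-\lambda s}(Ms)^m\, ds = \frac{m!\, M^m}{\lambda^m} \le \left(\frac{Mm}{\lambda}\right)^m.
\]
On $[0,s_0]$ I use the crude bound $\P(Y_j\le s) \le \P(Y_j \le s_0) \le (Ms_0)^m$, giving
\[
\int_0^{s_0} \lambda e^{-\lambda s}\, \P(Y_j \le s)\, ds \le (Ms_0)^m.
\]
Since $t \ge s_0$, the second term is dominated by $(Mt)^m$.

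Now I choose $\lambda = m/t$, which makes both contributions of the same order: the first piece becomes $(Mt)^m$ and the second is at most $(Mt)^m$, hence $\E e^{-\lambda Y_j} \le 2(Mt)^m$. Plugging back and using $e^{\lambda nt} = e^{mn}$, I obtain
\[
\P\!\left(\sum_{j=1}^n Y_j \le nt\right) \le e^{mn}\bigl(2(Mt)^m\bigr)^n = (2e)^n \cdot (Mt)^{mn} \cdot e^{(m-1)n} \cdot \text{(adjust)} \le (C Mt)^{mn}
\]
with $C = 2e$ (using that $2^n \le 2^{mn}$ for $m\ge 1$, so $e^{mn}\cdot 2^n \le (2e)^{mn}$), which is the desired conclusion.

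There is no real obstacle here — this is a routine Chernoff computation, and the only mildly delicate point is that the small-ball bound is assumed only for $s \ge s_0$, so the contribution from $[0,s_0]$ has to be absorbed using the hypothesis $t\ge s_0$. If one wanted a sharper constant $C$, the bound $m!\le m^m$ could be replaced by Stirling, but for the intended application only the form $(CMt)^{mn}$ is needed.
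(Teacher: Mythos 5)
Your argument is correct and matches the paper's proof almost line for line: both apply Markov's inequality to $e^{-\lambda\sum_j Y_j}$ with the choice $\lambda=m/t$, then represent $\E e^{-\lambda Y_j}$ as an integral of the distribution function $\P(Y_j\le\cdot)$ against the exponential density, split the range of integration, and absorb the small-$s$ contribution using the hypothesis $t\ge s_0$. The only cosmetic difference is where you split (at $s=s_0$ rather than, after a change of variable, at $s=t$), and both arguments share the same implicit reliance on $m\ge 1$ (or $m$ bounded below) when passing from $\bigl(1+\Gamma(m+1)/m^m\bigr)^n e^{mn}$ to $C^{mn}$.
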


 \begin{proof}
 	Let $t \ge  s_0$. By Markov's inequality,
 	\begin{align*} 
 		 \P \left( \sum_{j=1}^n Y_j \le n t \right)
 		&\le  \E \left[ \exp \left( m n - \frac{m}{t}\sum_{j=1}^n Y_j \right) \right] \\
 		&= e^{m n} \prod_{j=1}^{n} \E \exp \left(-\frac{m}{t} Y_j\right),	
 	\end{align*}
  where
  \begin{align*} 
  	 \E \exp \left(-\frac{m}{t} Y_j\right)
  	 &=	\int_0^1 \P \left[  \exp \left(-\frac{m}{t} Y_j\right) >s \right] \, ds
  	 = \int_0^{\infty} e^{-u} \P \left[ Y_j < \frac{t}{m} u \right] \, du \\
  	 &\le  \int_0^{m} e^{-u} \P \left[ Y_j < \frac{t}{m} \right] \, du 
  	 +  \int_m^{\infty} e^{-u} \P \left[ Y_j < \frac{t}{m} u \right] \, du \\
  	 & \le \left( M t\right)^m +  \int_m^{\infty} e^{-u} \left(\frac{M t}{m} u\right)^m  \, du \\
  	 &\le \left( 1+ \frac{\Gamma(m+1)}{m^m} \right) \cdot \left(M t\right)^m 
  	 \le (CM t)^m.
  	\end{align*}
  Here we used that $\P \left[ Y_j < \frac{t}{m} u \right] \le \P \left[ Y_j < \frac{t}{m}  \right]$ for $u \in (0,1)$ in the first inequality and the Stirling formula in the last one.
  Combining the two inequalities above completes the proof.
 \end{proof}
 
\subsection{Least common denominators and the small ball probability}	\label{subsec: LCD}
 The least common denominator (LCD) of a sequence of real numbers originally introduced in \cite{RV invertibility} turned out to be a useful tool to gauge the behavior of the Levy concentration function of a linear combination of independent random variables with constant coefficients. Its various versions played a crucial role in proving quantitative estimates of invertibility of random matrices, see e.g. \cite{R invertibility survey} and the references therein as well as more recent works including \cite{V symmetric}, \cite{BR invertibility}, \cite{LTV}, \cite{CJMS singular}, and numerous other papers.
 In what follows, we use the extension of the LCD to matrices introduced in \cite{RV no-gaps}.

\begin{definition}
	Let $V$ be an $m \times n$ matrix, and let $L>0, \a \in (0,1]$. 
	Define the \emph{least common denominator} (LCD) of $V$ by
	\[
	  D_{L,\a}(V)
	  = \inf \left( \norm{\theta}_2: \theta \in \R^m, \ 
	     \dist(V^\top \theta, \Z^n) < L \sqrt{\log_+ \frac{\a \norm{V^\top \theta}_2}{L} }  \right).
	\]
	If $E \subset \R^n$ is a linear subspace, we can adapt this definition to the orthogonal projection $P_E$ on $E$ setting
	\[
       D_{L,\a}(E)=D_{L,\a}(P_E)
       = \inf \left( \norm{y}_2: y \in E, \ 
         \dist(y, \Z^n) < L \sqrt{\log_+ \frac{\a \norm{y}_2}{L} }  \right).
    \]	
\end{definition}
This is a modification of \cite[Definition 6.1]{V symmetric} and \cite[Definition 7.1]{RV no-gaps}, where the same notion was introduced with $\a=1$.

We will use the following concentration function estimate in terms of the LCD and its corollary proved in \cite{RV no-gaps}.

\begin{theorem}[Small ball probabilities via LCD]			\label{thm: SBP LCD}
	Consider a random vector $\xi = (\xi_1,\ldots,\xi_n)$, where $\xi_k$ are i.i.d. copies
	of a real-valued random variable $\xi$ satisfying \eqref{eq: xi}.
	Consider a matrix $V \in \R^{m \times n}$. Then for every $L \ge \sqrt{m/p}$
	we have
	\begin{equation}							\label{eq: SBP LCD}
		\LL(V^\top \xi, t \sqrt{m}) 
		\le \frac{\big(CL/(\a \sqrt{m})\big)^m}{\det(VV^\top)^{1/2}}
		\left( t + \frac{\sqrt{m}}{D_{L, \a}(V^\top)} \right)^m, \quad t \ge 0.
	\end{equation}
\end{theorem}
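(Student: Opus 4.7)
The overall plan is the classical Halász–Esseen–LCD Fourier strategy. Applying a vector-valued Esseen inequality in $\R^m$,
\[
\LL(V^\top\xi,\, t\sqrt{m}) \le (Ct\sqrt{m})^m \int_{B_2^m(0,\, c/(t\sqrt{m}))} |\phi(\omega)|\, d\omega,
\]
where $\phi(\omega) = \E\exp(i\langle\omega, V^\top\xi\rangle)$. By independence of $\xi_1,\ldots,\xi_n$, the characteristic function factors:
\[
|\phi(\omega)| = \prod_{k=1}^n |\phi_\xi(u_k(\omega))|,
\]
where $u(\omega) \in \R^n$ is the natural linear function of $\omega$ produced by $V$ (so that $\langle \omega, V^\top \xi\rangle = \langle u(\omega), \xi\rangle$).

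\textbf{Step 2 (pointwise bound on each factor).} Using the symmetrization identity $|\phi_\xi(s)|^2 = \E\cos(s(\xi-\xi'))$, together with the anti-concentration hypothesis $\LL(\xi,1)\le 1-p$ and the subgaussian tail $\|\xi\|_{\psi_2}\le K$, derive
\[
|\phi_\xi(s)| \le \exp\!\bigl(-c p\, \E\, \dist(s\tilde\xi/(2\pi), \Z)^2\bigr), \qquad \tilde\xi = \xi - \xi'.
\]
Multiplying over $k$ then bounds $|\phi(\omega)|$ by $\exp\!\bigl(-cp\, \E \sum_k \dist(\tilde\xi\, u_k(\omega)/(2\pi), \Z)^2\bigr)$, i.e.\ by an exponential of the squared distance from $\tilde\xi\cdot u(\omega)$ to the integer lattice $\Z^n$.

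\textbf{Step 3 (change of variables and LCD splitting).} Change variables $y = u(\omega)$; since $u$ is an injection onto the $m$-dimensional column span of $V^\top$, the Jacobian contributes the factor $\det(VV^\top)^{-1/2}$, which is precisely the prefactor in the claim, and the region of integration becomes an ellipsoid $E$ in this subspace. Split $E$ into $\{\|y\|_2 \le D := D_{L,\a}(V^\top)\}$ and its complement. On the first region the definition of the LCD forces $\dist(y,\Z^n) \ge L\sqrt{\log_+(\a\|y\|_2/L)}$, so after averaging over $\tilde\xi$ (using the subgaussian tail to localize $|\tilde\xi|$), the integrand is dominated by a Gaussian envelope in $y$ of width $\sim L/\a$; this contributes $(CL/(\a\sqrt{m}))^m \cdot t^m$. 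On the complement only the crude subgaussian envelope is available, contributing the $(\sqrt{m}/D)^m$ term. A dyadic decomposition in $\|y\|_2$ combines them into the single $m$-th power envelope $(t + \sqrt{m}/D)^m$.

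The main technical obstacle is the merging in Step 3: packaging both regimes into a single $m$-th-power envelope requires that the Gaussian decay extracted from the LCD absorb the $m$-dimensional integration volume, which is precisely why the hypothesis $L \ge \sqrt{m/p}$ appears, and the logarithmic correction $\sqrt{\log_+(\a\|y\|_2/L)}$ in the LCD definition is what yields the clean prefactor $(CL/(\a\sqrt{m}))^m$ rather than a larger loss.
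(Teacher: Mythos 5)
The paper does not prove Theorem \ref{thm: SBP LCD}: it simply cites \cite[Theorem~7.5]{RV no-gaps} for the case $\alpha=1$ and remarks that the identical argument, run with the $\alpha$-modified definition of the LCD, gives the general case. Your outline is therefore supplying a proof where the paper gives a pointer, and it correctly identifies the Hal\'asz--Esseen--LCD Fourier strategy that underlies the cited result: Esseen in $\R^m$, factorization of the characteristic function, the symmetrization bound $|\phi_\xi(s)|\le \exp(-c\,\E\,\dist(s\tilde\xi/(2\pi),\Z)^2)$, and the Jacobian $\det(VV^\top)^{-1/2}$ from pushing the $\omega$-integral onto the relevant $m$-dimensional subspace. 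So the skeleton is right.

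There are, however, two substantive imprecisions in your Step 3 that would cause trouble if one tried to fill in the details. First, the LCD constrains the norm of the Fourier variable (the preimage $\theta$ in the definition), not the norm of the image $y=u(\omega)$; the threshold in the split is $\|\omega\|_2\le D$, which under the change of variables corresponds to an ellipsoid in $y$, not the ball $\{\|y\|_2\le D\}$. Second, and more importantly, the complement $\{\|\omega\|_2>D\}$ is \emph{not} controlled by any ``subgaussian envelope''---subgaussianity of $\xi$ gives no uniform decay of $|\phi_\xi(s)|$ for large $s$, so one cannot ``integrate the tail''. The $\sqrt{m}/D$ term instead arises from monotonicity of the L\'evy concentration function: one replaces $t$ by $\max(t,\sqrt{m}/D)$ (which only costs a factor $2^m$ in the conclusion), and for the enlarged $t$ the entire Esseen ball sits inside the region where the LCD bound applies, so there is no complement to estimate and no dyadic decomposition is needed. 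Finally, the decay supplied by the LCD, $\exp\bigl(-cL^2\log_+(\alpha\|u(\omega)\|/L)\bigr)$, is a high-degree power law (degree $\sim L^2\ge m/p$) rather than a Gaussian; calling it a ``Gaussian envelope'' obscures exactly the point where the hypothesis $L\ge\sqrt{m/p}$ enters---it is what makes this power-law decay integrable in dimension $m$ with the stated prefactor $(CL/(\alpha\sqrt{m}))^m$.
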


Theorem \ref{thm: SBP LCD} with $\a=1$ is \cite[Theorem 7.5]{RV no-gaps}. We notice that exactly the same proof with the modified definition of the LCD yields Theorem \ref{thm: SBP LCD} with a general $\a$.

\begin{corollary}[Small ball probabilities for projections]					\label{cor: SBP proj}
	Consider a random vector $\xi = (\xi_1,\ldots,\xi_N)$, where $\xi_k$ are i.i.d. copies
	of a real-valued random variable $\xi$ satisfying \eqref{eq: xi}.
	Let $E$ be a subspace of $\R^N$ with $\dim(E) = m$, and
	let $P_E$ denote the orthogonal projection onto $E$.
	Then for every $L \ge \sqrt{m/p}$ we have
	\begin{equation}							\label{eq: SBP proj}
		\LL(P_E \xi, t \sqrt{m}) \le \left( \frac{CL}{\a \sqrt{m}} \right)^m
		\left( t + \frac{\sqrt{m}}{D_{L,\a}(E)} \right)^m, \quad t \ge 0.
	\end{equation}
\end{corollary}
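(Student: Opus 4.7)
The plan is to derive Corollary \ref{cor: SBP proj} from Theorem \ref{thm: SBP LCD} by representing the orthogonal projection $P_E$ through an orthonormal basis of $E$. I would first fix an orthonormal basis $u_1 \etc u_m$ of $E$ and form the $m \times N$ matrix $V$ whose rows are $u_1^\top \etc u_m^\top$. Orthonormality of the rows immediately gives $V V^\top = I_m$, so $\det(V V^\top) = 1$; this is exactly why the determinant factor disappears in passing from the matrix form to the projection form of the bound.

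Next I would transport both the small ball event and the LCD from the $\R^m$-picture to the subspace picture. The map $V^\top \colon \R^m \to E$ is an isometry with $V^\top V = P_E$ (as an operator on $\R^N$), so for every $y \in \R^m$,
\[
  \norm{V \xi - y}_2 = \norm{V^\top V \xi - V^\top y}_2 = \norm{P_E \xi - V^\top y}_2.
\]
The supremum defining $\LL(P_E \xi, \cdot)$ over shifts $y' \in \R^N$ may be restricted to $y' \in E$, since replacing $y'$ by $P_E y'$ only decreases the distance to $P_E \xi$; since every such $y'$ is of the form $V^\top y$, this yields $\LL(P_E \xi, s) = \LL(V \xi, s)$ for all $s > 0$. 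The same isometry, via the substitution $y = V^\top \theta$ and $\norm{\theta}_2 = \norm{V^\top \theta}_2$, turns the infimum defining the matrix LCD of $V$ (that is, $D_{L,\a}(V^\top)$ in the notation of Theorem \ref{thm: SBP LCD}) into the infimum defining $D_{L,\a}(E)$, so the two LCDs coincide.

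Substituting $\det(V V^\top) = 1$ and $D_{L,\a}(V^\top) = D_{L,\a}(E)$ into \eqref{eq: SBP LCD} applied to this $V$ then delivers \eqref{eq: SBP proj}. The argument is a change-of-variables on top of Theorem \ref{thm: SBP LCD}, and no step is a real obstacle; the only point that deserves an explicit line is the reduction of the Levy supremum to shifts inside $E$, which is the one-line orthogonal projection argument above.
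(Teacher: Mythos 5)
Your proof is correct and is precisely the standard derivation (the one given for the $\a=1$ case in \cite{RV no-gaps}, to which the paper defers without reproducing the argument): form an $m\times N$ matrix $V$ with orthonormal rows spanning $E$, so that $\det(VV^\top)=1$, $V^\top V = P_E$, the Lévy concentration functions of $V\xi$ and $P_E\xi$ agree (via the orthogonal-decomposition reduction of the shift to $E$, which you rightly flag as the one point worth writing down), and the matrix LCD coincides with $D_{L,\a}(E)$ under the isometry $\theta\mapsto V^\top\theta$. Substituting into \eqref{eq: SBP LCD} then yields \eqref{eq: SBP proj} exactly as you say.
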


We will need a lemma which essentially generalizes the fact that the LCD of an incompressible vector is $\Omega(\sqrt{n})$.
We will formulate it in a somewhat more technical way required for the future applications.
\begin{lemma} \label{lem: LCD compressible}
	Let $s, \a \in (0,1)$.
	Let $U$ be an $n \times l$ matrix such that $U \R^l \cap S^{n-1} \subset \Incomp(sn, \a)$
	Then any $\theta \in \R^l$ with $\norm{U \theta}_2 \le \sqrt{sn}/2$ satisfies
	\[
	   \dist(U \theta, \Z^n) \ge L \sqrt{\log_+ \frac{\a \norm{U \theta}_2}{L} }.
	\]
\end{lemma}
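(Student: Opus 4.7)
The idea is to use the incompressibility of the normalized vector $y := U\theta/\|U\theta\|_2$ to force a linear-size set of coordinates of $y$ to have magnitude between $\alpha/\sqrt{n}$ and $1/\sqrt{sn}$, then to translate this via the hypothesis $\|U\theta\|_2 \le \sqrt{sn}/2$ into a lower bound of the form $\dist(U\theta, \Z^n) \ge \alpha\|U\theta\|_2/\sqrt{2}$, and finally to check elementarily that the latter dominates $L\sqrt{\log_+(\alpha\|U\theta\|_2/L)}$ for every $L > 0$. The case $U\theta = 0$ is trivial (both sides vanish), so below I assume $U\theta \neq 0$.

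Write $x := U\theta$, so $y = x/\|x\|_2 \in U\R^l \cap S^{n-1} \subset \Incomp(sn, \alpha)$. Let $I \subset [n]$ consist of the $sn$ indices on which $|y_i|$ is largest; the inequality $\sum_{j \in I} y_j^2 \le 1$ then forces $|y_i| \le 1/\sqrt{sn}$ on $I^c$, and because $y_I \in \Sparse(sn)$ the incompressibility hypothesis yields $\|y_{I^c}\|_2 > \alpha$. Removing from $I^c$ the coordinates with $|y_i| < \alpha/\sqrt{2n}$ subtracts at most $n \cdot \alpha^2/(2n) = \alpha^2/2$ from $\|y_{I^c}\|_2^2$, leaving the set
\[
  J := \{i \in I^c : |y_i| \ge \alpha/\sqrt{2n}\}
\]
with $\sum_{i \in J} y_i^2 \ge \alpha^2/2$ and $\alpha/\sqrt{2n} \le |y_i| \le 1/\sqrt{sn}$ on $J$ (integer parts absorbed into the constants).

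With this set $J$ in hand, the hypothesis $\|x\|_2 \le \sqrt{sn}/2$ ensures $|x_i| = \|x\|_2 |y_i| \le 1/2$ for every $i \in J$, so the nearest integer to $x_i$ is $0$ and $\dist(x_i, \Z) = |x_i|$. Summing and using the aggregate mass gives
\[
  \dist(x, \Z^n)^2 \ge \sum_{i \in J} x_i^2 = \|x\|_2^2 \sum_{i \in J} y_i^2 \ge \frac{\alpha^2 \|x\|_2^2}{2},
\]
that is, $\dist(x, \Z^n) \ge \alpha \|x\|_2/\sqrt{2}$. To conclude: if $\alpha \|x\|_2 \le L$ then $\log_+(\alpha\|x\|_2/L) = 0$ and there is nothing left to prove; otherwise, setting $T := \alpha\|x\|_2/L > 1$, the required inequality reduces to the elementary estimate $T^2/2 \ge \log T$, which holds for every $T \ge 1$ since at $T = 1$ it reads $1/2 \ge 0$ and its derivative $T - 1/T$ is positive on $(1, \infty)$. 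I do not anticipate a serious obstacle; the only subtle point to watch for is to use the aggregate $\ell^2$-mass $\sum_{i \in J} y_i^2 \ge \alpha^2/2$ rather than the pointwise lower bound $|y_i|^2 \ge \alpha^2/(2n)$, since the latter combined with $|J| \ge c \alpha^2 sn$ would only yield $\dist(x, \Z^n) \gtrsim \alpha^2 \sqrt{s}\|x\|_2$, too weak to dominate the right-hand side uniformly in $L$.
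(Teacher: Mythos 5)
Your proof is correct, and it arrives at the same key inequality $\dist(U\theta,\Z^n)\gtrsim \alpha\|U\theta\|_2$ as the paper, but by a noticeably different route. The paper's argument is shorter and more global: it takes $z\in\Z^n$ to be the \emph{nearest lattice point} to $U\theta$, notes that $\dist(U\theta,\Z^n)\le\|U\theta\|_2$ (compare with $z=0$) forces $\|z\|_2\le 2\|U\theta\|_2\le\sqrt{sn}$, and since $z$ has integer coordinates this means $|\mathrm{supp}(z)|\le sn$; incompressibility of $U\theta/\|U\theta\|_2$ then directly gives $\|U\theta-z\|_2\ge\alpha\|U\theta\|_2$. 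You instead work coordinate-by-coordinate, isolating a set $J$ of indices where $|y_i|$ is simultaneously $\le 1/\sqrt{sn}$ (so that $|x_i|\le 1/2$ and rounds to $0$) and carries total mass $\ge\alpha^2/2$; this is essentially a hands-on reconstruction of the same phenomenon and costs you a harmless extra factor of $\sqrt{2}$, which your closing elementary estimate $T^2/2\ge\log T$ correctly absorbs (the paper uses $t>\sqrt{\log_+ t}$ instead). The paper's ``nearest lattice point is sparse'' observation is cleaner and avoids the bookkeeping with $I$, $I^c$, $J$; your version has the minor virtue of making explicit \emph{which} coordinates supply the distance, and you were right to flag that one must use the aggregate $\ell^2$-mass $\sum_{i\in J}y_i^2\ge\alpha^2/2$ rather than the pointwise bound times $|J|$. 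Both proofs are valid.
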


\begin{proof}
	Take any $\theta \in \R^l$ such that $\norm{U \theta}_2 \le  \sqrt{sn}/2$. Let $x \in \Z^l$ be such that
	\[
	  \norm{U \theta -x}_2 = \dist(U \theta, \Z^n) \le  \norm{U \theta}_2.
	\]
	Then by the triangle inequality, $\norm{x}_2 \le  \sqrt{s n}$.
	Since the coordinates of $x$ are integer, this implies that $|\text{supp}(x)| \le  s n$.
	Therefore, 
	\[
	 \norm{\frac{U \theta}{\norm{U \theta}_2} - \frac{x}{\norm{U \theta}_2} }_2 \ge \a
	\]
	since $\frac{x}{\norm{U \theta}_2} \in \Sparse(s n)$.
	Combining the two previous inequalities, we see that 
	\[
	  \a \norm{U \theta}_2 \le \norm{U \theta -x}_2
	  = \dist(U \theta, \Z^n).
	\]
	The desired inequality follows now from an elementary estimate $t > \sqrt{\log_+ t}$ valid for all $t>0$ which is applied with $t=\frac{\a \norm{U \theta}_2}{L}$.
\end{proof}

\subsection{Integer points inside a ball}
  We will need a simple lemma estimating the number of integer points inside a ball in $\R^n$.
  Denote the Euclidean ball of radius $R$ centered at $0$ by $B(0,R)$ and the cardinality of a set $F$ by $|F|$.
  \begin{lemma} \label{lem: integer inside ball}
  	For any $R>0$,
  	\[
  	  	|\Z^n \cap B(0,R)| \le \left(2+ \frac{C R}{\sqrt{n}} \right)^n.
  	\] 
  \end{lemma}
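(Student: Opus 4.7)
The plan is the standard lattice-versus-volume packing argument. To each $z \in \Z^n \cap B(0,R)$ I would associate the unit cube $Q_z := z + [-1/2,\,1/2]^n$; these cubes have pairwise disjoint interiors and each has volume $1$. Since $Q_z$ has circumradius $\sqrt{n}/2$ about its center, the hypothesis $z \in B(0,R)$ forces $Q_z \subset B(0,\,R+\sqrt{n}/2)$. Summing the cube volumes,
\[
|\Z^n \cap B(0,R)| \;\le\; \vol\!\bigl(B(0,\,R+\sqrt{n}/2)\bigr) \;=\; \frac{\pi^{n/2}}{\Gamma(n/2+1)}\,(R+\sqrt{n}/2)^n.
\]

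Next I would apply Stirling's lower bound $\Gamma(n/2+1)\ge\sqrt{\pi n}\,(n/(2e))^{n/2}$ to get $\pi^{n/2}/\Gamma(n/2+1)\le(C_0/\sqrt{n})^n$ for an absolute $C_0$ (one may take $C_0=\sqrt{2\pi e}$). Substituting,
\[
|\Z^n \cap B(0,R)| \;\le\; \Bigl(\tfrac{C_0 R}{\sqrt{n}} + \tfrac{C_0}{2}\Bigr)^n,
\]
which is already of the required shape $(a+b R/\sqrt{n})^n$ with absolute constants $a,b$. To recover the particular constant $2$ in the statement, I would split into two regimes. When $R\ge \bar c\sqrt{n}$ for a small absolute constant $\bar c$, the additive term $C_0/2$ is dominated by $C R/\sqrt{n}$ as soon as $C$ is chosen large enough (e.g.\ $C\ge C_0/(2\bar c)+C_0$), and the stated bound follows by trivial absorption. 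When $0<R<\bar c\sqrt{n}$, the constraint $\norm{z}_2\le R$ forces $|\text{supp}(z)|\le R^2\le \bar c^2 n$, so $z$ is sparse: I would decompose by support $S\subset[n]$, apply the same volume bound inside each $\R^S$ (noting $R+\sqrt{|S|}/2\le 3R/2$), and sum using $\binom{n}{k}\le(en/k)^k$. Optimizing over $k$ (the maximum occurs around $k^\star\asymp (nR)^{2/3}$) shows the total is at most $2^n\le(2+CR/\sqrt{n})^n$ once $\bar c$ is chosen sufficiently small.

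The only nontrivial point is the last step: the raw volume bound alone produces the additive constant $C_0/2\approx\sqrt{2\pi e}/2\approx 2.07$, which is slightly larger than the $2$ demanded by the lemma. The sparsity argument in the small-$R$ regime exists precisely to close this gap, and the rest of the proof is a routine application of Stirling's formula together with the packing inequality.
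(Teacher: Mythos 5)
Your packing-plus-volume computation in the first half is exactly what the paper's one-line proof (``covering $B(0,R)$ by unit cubes and estimating the volume of their union'') amounts to, and you are right that it produces the additive constant $\sqrt{2\pi e}/2\approx 2.07$ rather than $2$. That discrepancy is immaterial, though: in every place the lemma is used --- in \eqref{eq: card(T)} and, twice, in the proof of Lemma~\ref{lem: net card} --- the additive constant is immediately swallowed into a larger $\tau$-, $r$-, or $\rho/\delta$-dependent constant, so all the argument ever needs is that the constant is absolute, and the ``$2$'' in the statement should be read as a round placeholder rather than a sharp number. The sparsity decomposition you bolt on to recover the literal $2$ is therefore effort the statement does not require; it also contains a small misstatement. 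A nonzero integer vector of support size $k$ has $\|z\|_2^2\ge k$, so your sum is forced to run only over $k\le R^2$; and for $R<\sqrt n$ one has $(nR)^{2/3}>R^2$, so the summand $\binom{n}{k}\vol\bigl(B^k(0,3R/2)\bigr)$ is increasing on the entire feasible range and its maximum sits at the boundary $k=\lfloor R^2\rfloor$, not at your $k^\star\asymp(nR)^{2/3}$. Evaluating there gives an $n$-th root of order $\exp\bigl((R^2/n)\log(Cn/R^2)\bigr)$, which tends to $1$ (not $2$) as $R/\sqrt n\to 0$, and that is the real reason the small-$R$ regime is harmless. None of this affects the correctness of the lemma; it only means the second half of your proof is doing more than the paper's statement requires.
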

The proof immediately follows by covering $B(0,R)$ by unit cubes and estimating the volume of their union.

\section{Compressible vectors} \label{sec: Compressible}

  The aim of this section is to prove that it is unlikely that the kernel of a rectangular matrix with i.i.d. entries satisfying \eqref{eq: xi}
  contains a large almost orthogonal system of compressible vectors. More precisely, we prove that the probability of such event does not exceed $\exp(-c l m)$, where $m$ is the number of rows of $B$ and $l$ is the number of vectors in the system. 
  The compressibility parameters will be selected in the process of the proof and after that, fixed for the rest of the paper.
  In Section \ref{sec: Rank}, we will apply this statement with $m=n-k$ and $l=k/4$, in which case the probability of existence of such almost orthogonal system becomes negligible for our purposes.
  
  We start with bounding the probability of presence of a fixed almost orthogonal system in the kernel of $B$. This bound relies on a corollary of the Hanson-Wright inequality, see \cite[Corollary 2.4]{RV Hanson-Wright}.
  Note that this result applies to any almost orthogonal system, not only to a compressible one.

\begin{lemma} \label{lem: individual}
	 Let $m \le n$, and let let $B$ be an $m \times n$ matrix whose entries are i.i.d. random variables satisfying \eqref{eq: xi}.
	Let $l \le n$ and let $v_1 \etc v_l \in S^{n-1}$ be an $l$-tuple of $\left(\frac{1}{2}\right)$-almost orthogonal vectors.
	Then
	\[
	\P \left( \norm{B v_j}_2 \le C_{\ref{lem: individual}}  \sqrt{m} \ \text{ for all } j \in [l] \right)
	\le \exp (-c_{\ref{lem: individual}}  l m).
	\]
\end{lemma}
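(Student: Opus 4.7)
The plan is to repackage the conjunction of the $l$ norm inequalities into a single lower bound on the Hilbert-Schmidt norm of the $m\times l$ matrix $BV$, and then invoke the vector form of the Hanson-Wright inequality from \cite{RV Hanson-Wright} after rewriting this Hilbert-Schmidt norm as the Euclidean norm of a single linear image of one i.i.d.\ subgaussian vector. Throughout, let $V$ be the $n\times l$ matrix whose columns are $v_1,\ldots,v_l$; the $(1/2)$-almost orthogonality hypothesis yields $s_l(V)\ge 1/2$, $s_1(V)\le 3/2$, and $\norm{V}_{\HS}^2=l$. The identity
\[
\norm{BV}_{\HS}^2 \;=\; \sum_{j=1}^{l} \norm{B v_j}_2^2
\]
reduces the lemma to showing $\norm{BV}_{\HS}\ge c\sqrt{lm}$ with probability at least $1-\exp(-c'lm)$.

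To expose the correct structure, I would stack all $mn$ entries of $B$ into a single i.i.d.\ subgaussian vector $X\in\R^{mn}$ and let $A=I_m\otimes V^\top$ be the $ml\times mn$ block-diagonal matrix whose $m$ diagonal blocks are copies of $V^\top$. Then $AX$ is the concatenation of the vectors $V^\top \Row_i(B)^\top$ over $i\in[m]$, so $\norm{AX}_2=\norm{BV}_{\HS}$, while the tensor-product structure gives $\norm{A}_{\HS}^2=m\norm{V}_{\HS}^2=ml$ and $\norm{A}=s_1(V)\le 3/2$. Corollary~2.4 of \cite{RV Hanson-Wright} provides subgaussian concentration of $\norm{AX}_2$ about $\sigma\norm{A}_{\HS}$, where $\sigma^2=\E\xi^2$; applied at half the expected magnitude it produces
\[
\P\!\left(\norm{AX}_2 \le \tfrac12\sigma\norm{A}_{\HS}\right)
\le 2\exp\!\left(-\frac{c\,\sigma^4 \norm{A}_{\HS}^2}{K^4\norm{A}^2}\right)
\le 2\exp(-c'\,lm),
\]
where $c'$ depends only on $p$ and $K$.

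The missing deterministic ingredient is the lower bound $\sigma^2\ge p/4$, which I would extract from \eqref{eq: xi} via Chebyshev: were $\sigma^2<p/4$, then $\P(|\xi|>1/2)<p$ and hence $\P(|\xi|\le 1/2)>1-p$, contradicting $\LL(\xi,1)\le 1-p$. Choosing $C_{\ref{lem: individual}}^2<p/16$ then forces the joint event $\{\norm{Bv_j}_2\le C_{\ref{lem: individual}}\sqrt{m}\text{ for all }j\}$ to imply $\norm{BV}_{\HS}<\tfrac12\sigma\norm{A}_{\HS}$, so its probability is at most $\exp(-c_{\ref{lem: individual}}\,lm)$. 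The one delicate point in the plan is guaranteeing that the Hanson-Wright exponent $\sigma^4\norm{A}_{\HS}^2/(K^4\norm{A}^2)$ really comes out as $\Omega(lm)$: this is exactly what the $(1/2)$-almost orthogonality buys, since without a bound on $s_1(V)$ one could have $\norm{A}\asymp\sqrt{l}$ and the exponent would collapse from $lm$ down to $m$.
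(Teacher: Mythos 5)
Your proof is correct and rests on the same key tool as the paper's (the Hanson--Wright-based concentration of $\norm{AX}_2$ about its Hilbert--Schmidt mean, from \cite{RV Hanson-Wright}) and the same reduction to a lower bound on $\norm{BV}_{\HS}$. The route to the exponent $lm$ is, however, packaged differently. The paper applies Hanson--Wright once per row to get $\P\big(\norm{V^\top \Row_i(B)^\top}_2^2 \le \tfrac{p}{2}\,l\big)\le \exp(-cl)$, and then boosts this to $\exp(-clm)$ with a counting argument: if $\sum_i \norm{V^\top \Row_i(B)^\top}_2^2$ is small, then at least $m/2$ of the $m$ mutually independent rows must individually be small. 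You instead apply Hanson--Wright once to the $ml\times mn$ tensor matrix $I_m\otimes V^\top$ acting on the vectorized $B$, so that $\norm{A}_{\HS}^2=ml$ while $\norm{A}=\norm{V}$ stays $O(1)$, obtaining the exponent $\Omega(lm)$ in a single stroke. Both are legitimate; yours is arguably cleaner because it dispenses with the boosting step, while the paper's is more elementary in that it only needs the one-row version and the independence of rows. Your observation that without the $s_1(V)=O(1)$ bound the exponent would degrade from $lm$ to $m$ is precisely the point of requiring almost orthogonality, and matches the role it plays in the paper. A small aside: the paper obtains $\E\xi^2 \ge p$ directly from $\E\xi^2 \ge \P(|\xi|\ge 1)\ge p$, which is marginally sharper than your Chebyshev bound $\sigma^2\ge p/4$, but the constant is immaterial.
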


\begin{proof}
	Let $V=(v_1 \etc v_l)$ be the $n \times l$ matrix formed by columns $v_1 \etc v_l$.
	The assumption of the lemma implies that $\norm{V} \le 2 \max_{j \in [l]} \norm{v_j}_2 =2$.
	On the other hand
	\[
	\sum_{j=1}^l s_j^2(V) = \norm{V}_{\HS}^2 = \sum_{j=1}^l \norm{v_j}_2^2 =l.
    \] 
    Note that if $\xi$ is a random variable satisfying \eqref{eq: xi}, then $\E \xi^2 \ge \P(|\xi| \ge 1) \ge p$.
    Let $\eta \in \R^n$ be a random vector with i.i.d. coordinates satisfying \eqref{eq: xi}.
	By the Hanson-Wright inequality,
	\[
	\P \left( \norm{V^\top \eta}_2^2 \le \frac{p}{2} l \right)
	\le \P \left( \norm{ V^\top \eta}_2^2 \le \frac{p}{2}  \norm{ V}_{\HS}^2 \right)
	\le \exp \left( -c  l\right). 
	\]
	Let $\eta_1 \etc \eta_m$ be i.i.d. copies of $\eta$.
	Then
	\[
	\P \left( \sum_{i=1}^m \norm{V^\top \eta_i}_2^2 \le \frac{p}{4}  l n\right)
	\le \exp \left(- \frac{c}{2}  l m \right).
	\]
	Indeed, the condition $ \sum_{i=1}^m \norm{V \eta_i}_2^2 \le \frac{p}{4}  l m$ implies that $ \norm{V \eta_i}_2^2 \le \frac{p}{2}  l $ for at least $m/2$ indexes $i \in [m]$. 
	These events are independent, and the probability of each one does not exceed $\exp(-cl)$.
	
	Applying the inequality above with $\eta_j=(\Row_j(B))^\top$, we obtain
	\begin{align*}
		\P \left( \norm{B v_j}_2^2 \le \frac{p}{4}  m \text{ for } j \in [l]\right)
		&\le \P \left( \sum_{j=1}^l \norm{B v_j}_2^2 \le \frac{p}{4}  m \cdot l \right) \\
		&= \P \left( \sum_{i=1}^m \norm{V^\top \eta_i}_2^2 \le \frac{p}{4} l m  \right)
		\le \exp (-\frac{c}{2}  l m).
	\end{align*}
	The proof is complete.
\end{proof}

  The next statement, Proposition \ref{prop: compressible}, contains the main result of this section. We will extend the bound of Lemma \ref{lem: individual} from the presence of a fixed almost orthogonal system of compressible vectors in the kernel of a random matrix to the presence of any such system. 

  The proof of Proposition \ref{prop: compressible} follows the  general roadmap of the geometric method.
  We start with constructing a special net for the set of compressible vectors in $S^{n-1}$. This net will consist of the vectors from a scaled copy of the integer lattice in $\R^n$. 
  The vectors of an almost orthogonal system will be then approximated by the vectors from this net using the procedure of \emph{random rounding}. 
  This procedure whose use  in random matrix theory was pioneered by Livshyts \cite{Liv} has now numerous applications in the problems related to invertibility. 
  One of its advantages is that it allows to bound the approximation error in terms of a highly concentrated Hilbert-Schmidt norm instead of the operator norm of the matrix. 
  In our case, this approximation presents two new special challenges. 
  First, we have to approximate all vectors $x_1 \etc x_l$ forming an almost orthogonal system at the same time and in a way that preserves almost orthogonality.
  Second, the vectors of the approximating system have to retain some sparsity properties the original vectors. 
  We will show below that all these requirements can be satisfied simultaneously for a randomly chosen approximation.
  The probability of that will be exponentially small in $l$ yet positive, which is sufficient since we need only to show the existence of such approximation.

\begin{proposition}  \label{prop: compressible}
	Let $k,n \in \N$ be such that $k \le n/2$ and let $B$ be an $(n-k) \times n$ matrix whose entries are i.i.d. random variables satisfying \eqref{eq: xi}.
	There exists $\tau >0$ such that  the probability that there exists a $\left( \frac{1}{4} \right) $-almost orthogonal $l$-tuple $ x_1 \etc x_l \in \Comp(\tau^2 n, \tau^4)$ with $l \le \tau^3 n$ and
	\[
	\norm{B x_j}_2 \le \tau \sqrt{n} \ \text{ for all } j \in [l]
	\]
	is less than $\exp(-c l n)$.
\end{proposition}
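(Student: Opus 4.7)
The plan is to execute the net-and-random-rounding scheme outlined in Section \ref{subsec: Outline}. Throughout, I condition on the event $\{\|B\|_{\HS}\le 2Kn\}$, which by Lemma \ref{lem: HS norm} has complement of probability only $\exp(-cn^2)$ and is thus negligible compared with the desired bound.

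Fix a discretization scale $T=C\sqrt n/\tau$ with $C$ large. Suppose a bad tuple $(x_1,\ldots,x_l)$ exists. For each $j$, pick a $\tau^2n$-sparse vector $y_j$ supported on $S_j\subset[n]$ with $\|x_j-y_j\|_2\le\tau^4$, and apply random rounding independently across pairs $(j,i)$ with $i\in S_j$, replacing $y_{j,i}$ by a random neighbor $w_{j,i}\in\{\lfloor Ty_{j,i}\rfloor/T,\lceil Ty_{j,i}\rceil/T\}$ of mean $y_{j,i}$; set $w_{j,i}=0$ outside $S_j$. The core claim is that with probability at least $\exp(-c_2l)$ over this rounding, the resulting $l$-tuple $(w_j)$ simultaneously (a) lies in the net $\NN=\{w\in\tfrac 1T\Z^n\cap\Sparse(\tau^2n):\|w\|_2\le 2\}$, (b) satisfies $\|B(y_j-w_j)\|_2\le\tau\sqrt n$ for every $j$, and (c) has normalized columns forming a $(1/2)$-almost orthogonal system. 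Item (b) follows from Markov's inequality applied to $\E\|B(y_j-w_j)\|_2^2\le\|B\|_{\HS}^2/(4T^2)\le K^2\tau^2 n/C^2$ (choosing $C$ large) combined with independence of the rounding across $j$, yielding joint probability $\ge(3/4)^l\ge\exp(-c_2l)$. Item (c) exploits that the random portion $Y-W$ of $X-W$ has independent, centered, bounded entries of magnitude $\le 1/T$, so Lemma \ref{lem: operator norm} applied to $T(Y-W)$ delivers $\|Y-W\|=O(\sqrt n/T)=O(\tau)$ with overwhelming probability; combined with the sparse-approximation remainder $\|X-Y\|$ and the $(1/4)$-almost orthogonality of $(x_j)$, this preserves the spectral properties once $\tau$ is chosen small enough.

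On the success event, the triangle inequality yields $\|Bw_j\|_2\le\|Bx_j\|_2+\|B(x_j-y_j)\|_2+\|B(y_j-w_j)\|_2\le C_{\ref{lem: individual}}\sqrt{n-k}$ (after absorbing the middle term through the calibrated choice of $\tau^4$ and using $n-k\ge n/2$). Lemma \ref{lem: integer inside ball} controls the net size:
\[
|\NN|\le\binom{n}{\tau^2n}\Bigl(2+\frac{C'T}{\sqrt{\tau^2n}}\Bigr)^{\tau^2n}\le\exp\!\bigl(c_0(\tau)\,n\bigr),\qquad c_0(\tau)=O\!\bigl(\tau^2\log(1/\tau)\bigr)\to 0\text{ as }\tau\to 0.
\]
Applying Lemma \ref{lem: individual} to each fixed $(1/2)$-almost orthogonal tuple in $\NN^l$ gives probability $\exp(-c_1 ln)$, and a union bound over the $\le\exp(c_0(\tau)\,ln)$ candidate tuples, compensated by the factor $\exp(c_2l)$ for the rounding, produces
\[
\P(\text{bad event})\le\exp\!\bigl(c_2l+c_0(\tau)ln-c_1 ln\bigr)\le\exp(-c'ln),
\]
once $\tau$ is fixed small enough that $c_0(\tau)\le c_1/2$.

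The principal technical obstacle is verifying (b) and (c) jointly with probability $\ge\exp(-c_2l)$. The random-rounding concentration delivers (b) and the random part of the perturbation in (c) without difficulty; the delicate issue is that the deterministic sparse-approximation error $X-Y$ only admits the crude bounds $\|X-Y\|\le\|X-Y\|_{\HS}\le\sqrt l\,\tau^4$ and $\|B(x_j-y_j)\|_2\le\|B\|_{\HS}\tau^4$, so its contribution to the Gram matrix must be kept below the $(1/4)$-almost-orthogonality margin and its contribution to $\|Bw_j\|_2$ below $\tau\sqrt n$. Reconciling these requirements against $l\le\tau^3n$ and the target probability is exactly what forces the specific exponents $\tau^2$, $\tau^3$, $\tau^4$ in the statement.
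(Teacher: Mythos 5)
Your high-level plan---condition on $\|B\|_{\HS}\le 2Kn$, approximate the tuple by an integer-lattice net via random rounding, apply Lemma \ref{lem: individual} pointwise, and union bound over the net---matches the paper's. However, there is a genuine gap in how you set up the rounding, and it is not repairable by ``calibrating $\tau^4$.''

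You first deterministically truncate each $x_j$ to a $\tau^2 n$-sparse $y_j$ with $\|x_j-y_j\|_2\le\tau^4$, then round $y_j$ on its support. This splits the approximation error into $(x_j-y_j)+(y_j-w_j)$. The random part $y_j-w_j$ is fine: it is mean-zero with independent coordinates of size $O(\tau/\sqrt n)$, so $\E\|B(y_j-w_j)\|_2^2$ is controlled by $\|B\|_{\HS}^2$. But the deterministic part $x_j-y_j$ is not mean-zero and not random, so the only available bound is $\|B(x_j-y_j)\|_2\le\|B\|\,\|x_j-y_j\|_2$; lacking operator-norm control you fall back on $\|B\|\le\|B\|_{\HS}\le 2Kn$, giving $\|B(x_j-y_j)\|_2\le 2K\tau^4 n$. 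To fit inside the budget $O(\sqrt n)$ needed for Lemma \ref{lem: individual} you would need $\tau^3\lesssim n^{-1/2}$, which contradicts $\tau$ being a fixed constant. No choice of the exponents $\tau^2,\tau^3,\tau^4$ fixes this, because the obstruction is an unfavorable $n$-dependence, not a constant.

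The paper sidesteps this by never performing a deterministic truncation at all. It rounds \emph{every} coordinate of $x_j$ to the lattice $\frac{\tau}{\sqrt n}\Z$, with the rounding rule chosen so that (i) the rounding error is mean-zero and uniformly bounded by $\tau/\sqrt n$ in each coordinate, hence controllable by $\|B\|_{\HS}$ via Chebyshev, and (ii) coordinates that are small (below $\tau^3/\sqrt n$, which covers all of $[n]\setminus I_2(j)$ by compressibility) round to $0$ with probability at least $1-\tau^2$. Point (ii) is where compressibility of $x_j$ enters; sparsity of the output is then obtained not deterministically but via a Chernoff bound on the number of nonzero small coordinates, costing only an extra $\exp(-c\tau^2 n)$ in the probability budget. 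Absorbing the truncation into the randomization---so that the \emph{entire} error $v_j-x_j$ is mean-zero---is the missing idea in your argument.
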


\begin{proof}
	Let $\tau \in (0,1/2)$ be a  number to be chosen later, and set
	\[
	T=  \left\{ v \in  \frac{\tau}{\sqrt{n}} \Z^n: \ \norm{v}_2 \in \left[\frac{1}{2}, 2\right] \right\}.
	\]
	Then Lemma \ref{lem: integer inside ball} applied to $R=\frac{\sqrt{n}}{\tau}$  yields
	\begin{equation}   \label{eq: card(T)}
		|T \cap \Sparse(4 \tau^2 n)|
		 \le \binom{n}{4 \tau^2 n} \cdot \left( 2+ \frac{C}{\tau}\right)^{4 \tau^2 n}
		\le \left( \frac{C' }{ \tau^3}\right)^{4 \tau^2 n}.
	\end{equation}
	
	Denote the coordinates of a vector $x \in \R^n$ by $x(1) \etc x(n)$.
	Consider a $\left(\frac{1}{4}\right)$-almost orthogonal $l$-tuple
	$x_1 \etc x_l \in \Comp(\tau^2 n, \tau^4)$.
	Since $x_j \in \Comp(\tau^2 n,   \tau^4)$, there is a set $I_1(j) \subset [n]$ with $|I_1(j)| \le \tau^2 n$ such that 
	\[
	\sum_{i \in [n] \setminus I_1(j)} x_j^2(i) \le \tau^8.
	\]
	Using an elementary counting argument, we conclude that there exists $I_2(j) \supset I_1(j)$ with $|I_2(j)| \le 2 \tau^2 n$ such that  
	\[
	  |x_i| \le \frac{ \tau^3 }{ \sqrt{n}}
	  \quad \text{for any }  i \in [n] \setminus I_2(j).
	\]
	
	For $j \in [l]$, define the vector $w_j=(w_j(1) \etc w_j(n))$ by
	\[
	w_j(i)=  \frac{\tau}{ \sqrt{n}} \cdot \left \lfloor   \frac{ \sqrt{n}}{\tau} |x_j(i)| \right \rfloor \sign (x_j(i)).
	\]
	This form of rounding is chosen to approximate small in the absolute value coordinates of $x_j$ by zeros.

	Define independent random variables $\e_{i,j}$ such that
	\begin{align*}
		\P(\e_{i,j}  = w_j(i)- x_j(i))
		&= 1-  \frac{ \sqrt{n}}{\tau} |x_j(i)-w_j(i)|,
		\intertext{ and } 
		\P \left(\e_{i,j}  = w_j(i)- x_j(i)+ \frac{\tau}{\sqrt{n}} \sign(x_j(i)) \right)
		&= \frac{\sqrt{n}}{\tau} |x_j(i)-w_j(i)|.
	\end{align*}
	Set 
	\[
	v_j=x_j + \sum_{i=1}^n \e_{i,j} e_i. 
	\]
	Then $v_j$ is a random vector such that $\E v_j=x_j$. 
	Moreover, $\norm{v_j-x_j}_2 \le \tau<1/2$, and so  $\norm{v_j}_2 \in [\frac{1}{2},2]$, which implies that  $v_j \in T$ for all $j \in [l]$.

	The definition of $v_j$ above means that for any $i \in [n] \setminus I_2(j)$, $w_j(i)=0$, and so  $\P (v_j(i) \neq 0)  \le \tau^2$ for these $i$.
	Set $I_3(j)=I_2(j) \cup \{ i \in [n] \setminus I_2(j) \ v_j(i) \neq 0 \}$.
	Since the events $v_j(i) \neq 0$ are independent for all $ i  \in [n] \setminus I_2(j)$ for a given $j \in [l]$, Chernoff's inequality in combination with the union bound over $j$ yield
	\begin{equation}  \label{eq: comp1}
		\P ( \forall j \in [l] \ |I_3(j)| \le 4 \tau^2 n) \ge 1- l \exp(-c \tau^2 n).
	\end{equation}
	Note that if $ |I_3(j)|< 4 \tau^2 n$ for all $j \in [l]$ then all the vectors $v_1 \etc v_l$ belong to $T \cap \Sparse(4 \tau^2 n)$.
	
	Let us form the $n \times l$ matrices $X$ and $V$ with columns $x_1 \etc x_l$ and $v_1 \etc v_l$ respectively. 
	Then the matrix $V-X$ has independent centered entries $\e_{i,j}$ whose absolute values are bounded by $\frac{\tau}{\sqrt{n}}$.
	This means that the random variables $\frac{\sqrt{n}}{\tau} \e_{i,j} $ satisfy the assumptions of Lemma \ref{lem: operator norm}.
	In view of this lemma,
	\[
	\P \left(\norm{V-X} \le C_{\ref{lem: operator norm}} \tau \right)
	\ge 1- \exp(-c_{\ref{lem: operator norm}} n).
	\]
	Define the diagonal matrix $D_V= \diag (\norm{v_1}_2 \etc \norm{v_l}_2)$.
	Recall that
	\[
	1- \tau \le \norm{v_j}_2 \le  1 + \tau
	\]
	for all $j \in [l]$,
	and $s_l(X) \ge \frac{3}{4}$ since the vectors $x_1 \etc x_l$ are $\left(\frac{1}{4}\right)$-almost orthogonal.
	Hence, if the event $\norm{V-X} \le  \tau$ occurs, then
	\begin{align*}
		s_l(V D_V^{-1}) 
		&\ge s_l(X D_V^{-1}) - \norm{X-V} \cdot \norm{D_V^{-1}} \\
		&\ge s_l(X) \cdot s_l( D_V^{-1}) - \norm{X-V} \cdot \norm{D_V^{-1}} \\
		&\ge \frac{3}{4} (1+\tau)^{-1} - C_{\ref{lem: operator norm}} \tau \cdot (1-\tau)^{-1} \\
		&\ge \frac{1}{2}
	\end{align*}
    where the last inequality holds if 
	\[
	\tau  \le \tau_0
	\]
	for some $\tau_0>0$.
	
	Similarly, we can show that $s_1(V D_V^{-1}) \le \frac{3}{2}$, thus proving that the vectors $v_1 \etc v_l$ are $\left(\frac{1}{2}\right)$-almost orthogonal.
	This shows that
	\begin{equation}  \label{eq: comp2}
		\P \left( v_1 \etc v_l \text{ are $\left( \frac{1}{2} \right)$-almost orthogonal}\right) 
		\ge 1- \exp(-c n).
	\end{equation}
	
	Let $\EE_{\HS}$ be the event that $\norm{B}_{\HS} \le 2Kn$.
    Lemma \ref{lem: HS norm} yields that 
    \[
      \P(\EE_{\HS}) \ge 1- \exp(-cn^2).
    \]
	Condition on a realization of the matrix $B$ such that $\EE_{\HS}$ occurs.
	Since the random variables $\e_{i,j}$ are independent, 
	\begin{align*}
		\E \norm{B (x_j-v_j)}_2^2 
		&= \E \norm{\sum_{i=1}^n \e_{i,j} B e_i}_2^2
		= \sum_{i=1}^n \E \e_{i,j}^2 \norm{B e_i}_2^2 \\
		&\le \left( \frac{\tau}{ \sqrt{n}} \right)^2 \norm{B}_{\HS}^2 \le 4K^2 \tau^2 n.
	\end{align*}
	Hence by Chebyshev's inequality
	\[
	\P \left[\norm{B (x_j-v_j)}_2 \le 3 K \tau \sqrt{n} \mid \EE_{\HS} \right]
	\ge \frac{1}{2}.
	\]
	In view of the independence of these events for different $j$,
	\begin{equation}  \label{eq: comp3}
		\P \left[ \forall j \in [l] \ \norm{B (x_j-v_j)}_2 \le 3 K \tau \sqrt{n} \mid \EE_{\HS} \right]
		\ge 2^{-l}.
	\end{equation}
	Let us summarize \eqref{eq: comp1}, \eqref{eq: comp2}, and \eqref{eq: comp3}.
	Recall that $l \le \tau^3 n$.
	If $\tau$ is sufficiently small, i.e., $\tau \le \tau_1$ for some $\tau_1>0$,
	then 
	\[
	1- \exp(-c_{\ref{lem: operator norm}} n) - l\exp(-c \tau^2 n)+ 2^{-l} >1.
	\]
	This means that
	conditionally on $B$ for which the event $\EE_{\HS}$ occurs, we can find a realization of random variables $\e_{i,j}, \ i\in [n], j \in [l]$ such that
	\begin{itemize}
		\item the vectors $v_1 \etc v_l$ are  $\left( \frac{1}{2} \right)$-almost orthogonal;
		\item  $v_1 \etc v_l \in T \cap \Sparse(4 \tau^2 n)$, and
		\item $ \norm{B (x_j-v_j)}_2 \le 3 K \tau \sqrt{n}$  for all $j \in [l]$.
	\end{itemize}

	Assume that there exists a $\left(\frac{1}{4}\right)$-almost orthogonal $l$-tuple
	$x_1 \etc x_l \in \Comp(\tau^2 n, \tau^4)$ such that 
	$\norm{B x_j}_2 \le \tau \sqrt{n} \ \text{ for all } j \in [l]$.
	Then the above argument shows that conditionally on $B$ such that $\EE_{\HS}$ occurs, we can find vectors $v_1 \etc v_l \in   T \cap \Sparse(4 \tau^2 n)$ which are  $\left( \frac{1}{2} \right)$-almost orthogonal such that  $ \norm{B v_j}_2 \le 4 K \tau \sqrt{n}$  for all $j \in [l]$ since $K \ge 1$.
	Therefore,
	\begin{align*}
		&\P (\exists x_1 \etc x_l \in \Comp(\tau^2 n, \tau^4): \ \norm{B x_j}_2 \le \tau \sqrt{n} \ \text{ for all } j \in [l] \text{ and } \EE_{\HS} ) \\
		\le &\P \Big[\exists v_1 \etc v_l \in   T \cap \Sparse(4 \tau^2 n): \ v_1 \etc v_l \text{ are } \left( \frac{1}{2} \right)\text{-almost orthogonal } \\
		&\qquad \qquad \text{and }\norm{B v_j}_2 \le 4 K \tau \sqrt{n} \ \text{ for all } j \in [l] \mid \EE_{\HS} \Big] \cdot \P(\EE_{\HS}) \\
		= &\P \Big(\exists v_1 \etc v_l \in   T \cap \Sparse(4 \tau^2 n): \ v_1 \etc v_l \text{ are } \left( \frac{1}{2} \right)\text{-almost orthogonal } \\
		&\qquad \qquad \text{and }\norm{B v_j}_2 \le 4 K \tau \sqrt{n} \ \text{ for all } j \in [l] \text{ and } \EE_{\HS} \Big).
	\end{align*}
	Let us show that the latter probability is small. 
	Assume that 
	\[
	  \tau 
	  \le \min \left( \tau_0, \tau_1 ,\frac{C_{\ref{lem: individual}}}{4K} \right).
	\]
	Recall that the number of rows of $B$ satisfies $n-k \ge n/2$.
	In view of Lemma \ref{lem: individual} and \eqref{eq: card(T)}, 
	\begin{align*}
		&\P \big( \exists v_1 \etc v_l \in T \cap \Sparse(4 \tau^2 n):  \  (v_1 \etc v_l) \text{ is $\left( \frac{1}{2}\right)$-almost orthogonal} \\
		&\qquad  \text{ and }  \norm{B v_j}_2 \le  4K \tau  \sqrt{n} \ \text{ for all } j \in [l] \big) \\
		&\le \left( \frac{C }{ \tau^3}\right)^{4 \tau^2 n \cdot l} \cdot \exp \left(-\frac{c_{\ref{lem: individual}} }{2}l n \right)
		\le \exp \left( - \left[\frac{ c_{\ref{lem: individual}} }{2}-4 \tau^2 \log \left( \frac{C  }{\tau^3}  \right)\right]  l n \right) \\
		&\le \exp \left(- \frac{c_{\ref{lem: individual}}}{4} l n \right),
	\end{align*}
	where the last inequality holds if we choose $\tau$ sufficiently small.

	The previous proof shows that
	\begin{align*}
		&\P (\exists x_1 \etc x_l \in \Comp(\tau^2 n, \tau^4) \ \norm{B x_j}_2 \le \tau \sqrt{n} \ \text{ for all } j \in [l] \text{ and } \EE_{\HS} ) \\
		\le &\exp \left(- \frac{c_{\ref{lem: individual}}}{4} l n \right).
	\end{align*}
	In combination with the inequality $\P(\EE_{\HS}^c) \le \exp(-c n^2)$, this completes the proof.
\end{proof}
  We will fix the value of $\tau$ for which Proposition \ref{prop: compressible} holds for the rest of the paper.

\section{Incompressible vectors} \label{sec: Incompressible}

 The main statement of this section, Proposition \ref{prop: matrix V}  bounds the probability that the kernel of a rectangular matrix $B$ with   i.i.d. entries satisfying assumptions \eqref{eq: xi} contains an almost orthogonal system of incompressible vectors with  subexponential common denominators. In what follows, $B$ will be the $(n-k) \times n$ matrix whose rows are $\Col_1(A)^\top \etc \Col_{n-k}(A)^\top,$ and the required probability should be of order $\exp(-c kn)$ to fit Theorem \ref{thm: rank}.
 The need to achieve such a tight probability estimate requires considering the event that $l$ vectors in the kernel of $B$ have subexponential least common denominator. The number $l$ here is proportional to $k$.
 Recall that a vector has a relatively small least common denominator if after being scaled by a moderate factor, it becomes close to the integer lattice. 
 Since we have to consider $l$ such vectors at once, and the norms of these scaled copies vary significantly, it is more convenient to consider these copies, and not the original unit vectors as we did in Proposition \ref{prop: compressible}.
 Moreover, to bound the probability, we have to consider all vectors with a moderate least common denominator in the linear span of the original system of $l$ vectors in the kernel of $B$.
 To make the analysis of such linear span more manageable, we will restrict our attention to the almost orthogonal systems. 
 This restriction will be later justified by using Lemma \ref{lem: min config}.
 
  Throughout the paper, we set 
  \begin{equation} \label{eq: def L}
	L= \sqrt{k/p},  \quad \a=\frac{\tau^4}{4}.
  \end{equation}
  where $k$ appears in Theorem \ref{thm: rank},  $p$ is a parameter from \eqref{eq: xi}, and $\tau$ was chosen at the end of Section \ref{sec: Compressible}.

\begin{proposition} \label{prop: matrix V}
	Let $\rho \in (0,\rho_0)$, where $\rho_0=\rho_0(\tau)$ is some positive number.
	Assume that $l \le k \le  \frac{\rho}{2}  \sqrt{n}$.

	Let $B$ be an $(n-k) \times n$ matrix with i.i.d. entries satisfying \eqref{eq: xi}. 
	Consider the event $\EE_l$  that there exist vectors $v_1 \etc v_l \in \ker(B)$ having the following properties.
	\begin{enumerate}
		\item \label{it: 1} $\frac{\tau}{8} \sqrt{n} \le \norm{v_j}_2 \le \exp \left( \frac{\rho^2 n}{4 L^2}\right)$ for all $j \in [l]$;
		\item \label{it: 15} $\Span (v_1 \etc v_l) \cap S^{n-1} \subset \Incomp (\tau^2 n, \tau^4)$;
		\item \label{it: 2} The vectors $v_1 \etc v_l$ are $\left(\frac{1}{8}\right)$-almost orthogonal;
		\item \label{it: 3} $\dist(v_j, \Z^n) \le \rho \sqrt{n}$ for $j \in [l]$;
		\item \label{it: 4} 
		The  $n \times l$ matrix $V$  with columns $v_1 \etc v_l$
		satisfies
		\[
		 \dist (V \theta, \Z^n) > \rho \sqrt{n}
		\]
		for all $\theta \in \R^l$ such that $\norm{\theta}_2 \le \frac{1}{20 \sqrt{l}}$ and $\norm{V \theta}_2 \ge \frac{\tau}{8} \sqrt{n}$.
	\end{enumerate}
    Then
    \[
      \P (\EE_l) \le \exp \left(- l n \right).
    \]
\end{proposition}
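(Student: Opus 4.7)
The plan is a net argument via random rounding, following the template of Proposition~\ref{prop: compressible} but substantially more intricate because the norms $\norm{v_j}_2$ range over many scales and because the LCD-type minimality~\eqref{it: 4} must be preserved. The first ingredient is a lower bound on the least common denominator of the $n \times l$ matrix $V$ with columns $v_j$. For $\theta \in \R^l$ with $\norm{\theta}_2 \le 1/(20\sqrt{l})$, I would split into two cases. When $\norm{V\theta}_2 \le \tau\sqrt{n}/2$, incompressibility~\eqref{it: 15} and Lemma~\ref{lem: LCD compressible} (with $s=\tau^2$) give $\dist(V\theta, \Z^n) \ge L\sqrt{\log_+(\alpha\norm{V\theta}_2/L)}$. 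When $\norm{V\theta}_2 \ge \tau\sqrt{n}/8$, property~\eqref{it: 4} gives $\dist(V\theta,\Z^n) > \rho\sqrt{n}$, and combining~\eqref{it: 1}, \eqref{it: 2}, and $k \le \rho\sqrt{n}/2$ one checks that $\norm{V\theta}_2 \le (L/\alpha)\exp(\rho^2 n/L^2)$, hence $\rho\sqrt{n} \ge L\sqrt{\log_+(\alpha\norm{V\theta}_2/L)}$. Consequently $D_{L,\alpha}(V) \gtrsim 1/\sqrt{l}$, which is the LCD bound that feeds the small ball inequality.

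Next, I would dyadically decompose the range of norms $[\tau\sqrt{n}/8, \exp(\rho^2 n/(4L^2))]$ into scales $D_j$ with $\norm{v_j}_2 \in [D_j, 2D_j]$; the number of scale sequences is at most $\exp(O(l\log n))$, negligible next to $\exp(-ln)$. For each fixed scale sequence, the net is built by random rounding of each $v_j$ coordinate-wise to the lattice $(\tau/\sqrt{n})\Z^n$, producing $\tilde{v}_j$. Each coordinate has variance at most $\tau^2/(4n)$, so conditionally on $\EE_\HS = \{\norm{B}_\HS \le 2Kn\}$ (which holds with probability at least $1 - e^{-cn^2}$ by Lemma~\ref{lem: HS norm}),
\[
\E\left[\norm{B(\tilde{v}_j - v_j)}_2^2\right] \le \frac{\tau^2}{4n}\norm{B}_\HS^2 \le K^2\tau^2 n.
\]
Because $v_j \in \ker(B)$ we have $B\tilde{v}_j = B(\tilde{v}_j - v_j)$, and Chebyshev plus independence across $j$ yield $\P(\norm{B\tilde{v}_j}_2 \le CK\tau\sqrt{n}\ \forall j \mid \EE_\HS) \ge 2^{-l}$. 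Running the analogous probabilistic computation for the other requirements, a single realization of the rounding can be found for which $\tilde{V}$ simultaneously retains (with mildly worsened constants) almost orthogonality~\eqref{it: 2}, incompressibility~\eqref{it: 15}, and the LCD lower bound from the first paragraph, since each of these degradations has failure probability at most $e^{-cl}$. Hence a deterministic $\tilde{V}$ with all these good properties exists in the lattice-based net.

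I would then conclude by a union bound. The net has cardinality at most $\prod_j (CD_j/\tau)^n$ per scale sequence by Lemma~\ref{lem: integer inside ball}. For a fixed $\tilde{V}$, Theorem~\ref{thm: SBP LCD} applied to $\tilde{V}^\top$, combined with tensorization (Lemma~\ref{lem: tenzorization}) across the independent rows of $B$, gives a small ball bound in which the determinant factor $\det(\tilde{V}^\top\tilde{V})^{1/2} \ge 2^{-l}\prod_j \norm{\tilde{v}_j}_2 \gtrsim \prod_j D_j$ from Lemma~\ref{lem: almost ort} cancels exactly against the $\prod_j D_j$ arising from the net cardinality. What remains is a bound of the form $(C'\rho LK/(\alpha\sqrt{l}))^{ln}$ per scale sequence, which becomes at most $\exp(-2ln)$ once $\rho_0 = \rho_0(\tau)$ is chosen small enough; summing over the $\exp(O(l\log n))$ scale sequences then yields the proposition. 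The hardest part of the argument is the preceding paragraph: verifying, through a joint probabilistic analysis across multiple scales and many correlated constraints, that a single realization of the random rounding simultaneously preserves every required structural property. This is the content of the approximation lemma highlighted as the most technical step in the outline.
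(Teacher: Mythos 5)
Your high-level strategy matches the paper's: dyadic scale decomposition, an LCD lower bound via the two-case split (Lemma~\ref{lem: LCD compressible} for small $\norm{V\theta}_2$, condition~\eqref{it: 4} for large), a net built by random rounding whose error is controlled through $\norm{B}_{\HS}$, and then Theorem~\ref{thm: SBP LCD} plus Lemma~\ref{lem: tenzorization} plus a union bound. The structural gap is in the net.

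You round each $v_j$ to the lattice $(\tau/\sqrt{n})\Z^n$ and count $\prod_j(CD_j/\tau)^n$ points. Since $D_j \ge \tau\sqrt{n}/8$, that is at least $(c\sqrt{n})^{ln}$, i.e.\ $\exp(\Theta(ln\log n))$ points. The small ball probability you can extract from Theorem~\ref{thm: SBP LCD} and tensorization is at best $\exp(-O(ln))$ per tuple after the determinant factor is used, so the union bound diverges: the net is larger than the probability is small by a factor $\exp(c\,ln\log n)$. What rescues the paper is condition~\eqref{it: 3}, which you do not use in the net construction: since every $v_j$ is within $\rho\sqrt{n}$ of $\Z^n$, the paper rounds to a \emph{constant}-mesh lattice $\delta\Z^n$ (with $\delta^{-1}\in\N$ so $\Z^n\subset\delta\Z^n$), which forces the rounded $u_j$ to stay within $2\rho\sqrt{n}$ of $\Z^n$. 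The resulting net $\NN_j$ is a sum $\big(\Z^n\cap O(d_j)B_2^n\big)+\big(\delta\Z^n\cap 2\rho\sqrt{n}B_2^n\big)$, of size $(Cd_j/\sqrt{n})^n\cdot(C\rho/\delta)^n$ --- smaller than yours by exactly the fatal $(c\sqrt{n})^n$ factor per $j$. Without this, the argument does not close.

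A second, smaller issue: the cancellation in your last paragraph is stated too loosely. The net cardinality carries exponent $n$ and the tensorized small ball carries exponent $n-k$, so the $\prod_j d_j$ factors do \emph{not} cancel exactly; the leftover $(\prod_j d_j/\sqrt{n})^k \le (R/\sqrt{n})^{lk}$ must be bounded using the explicit cutoff $R=\exp(\rho^2 n/(4L^2))$ from condition~\eqref{it: 1} together with $L^2=k/p$, which is precisely why $R$ has that form. Also, the $\rho$ in your claimed per-scale bound $\bigl(C'\rho LK/(\alpha\sqrt{l})\bigr)^{ln}$ has no source in your computation --- there is no $\rho$ in your net count or your small ball estimate --- so the ``choose $\rho_0$ small'' step is unjustified as written. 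In the paper, $\rho$ enters through the $(C\rho/\delta)^n$ factor in the net (coming from the $\Z^n$-proximity), which is exactly the mechanism you are missing.
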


Conditions \eqref{it: 1}-\eqref{it: 3} mean that $v_1 \etc v_l$ is a $(1/8)$-almost orthogonal system of incompressible vectors close to the integer lattice, and condition \ref{it: 4} is a certain minimality property of this system.

 To simplify the analysis, we will  tighten condition \eqref{it: 1} of Proposition \ref{prop: matrix V}   restricting the magnitudes of the norms of $v_j$ to some dyadic intervals. 
 Denote for shortness 
 \begin{equation} \label{eq: def r,R}
   r= \frac{\tau}{16}, \quad R= \exp \left(\frac{\rho^2 n}{4 L^2}\right).
 \end{equation}
  Consider a vector $\mathbf{d}=(d_1 \etc d_l) \in [r \sqrt{n}, R]^l$ and define the set  $W_{\mathbf{d}}$ be the set of $l$-tuples of vectors $v_1 \etc v_l \in \R^n$ satisfying
 \[
\norm{v_j}_2 \in [d_j, 2d_j] \text{ for all } j \in [l];
\]
 and conditions \eqref{it: 15} -- \eqref{it: 4}  of Proposition \ref{prop: matrix V}.
 We will prove the proposition for vectors $v_1 \etc v_l$ with such restricted norms first and derive the general statement by taking the union bound over $d_1 \etc d_l$ being dyadic integers.

We begin the proof of Proposition \ref{prop: matrix V} with constructing a special net for the set $W_{\mathbf{d}}$.
This will follow by proving that the $l$-tuples from the net approximate any point of $W_{\mathbf{d}}$ in a number of senses. After that we will prove the individual small ball probability estimate for \emph{some} $l$-tuples from the net. These will be exactly those tuples that appear as a result of approximation of points from $W_{\mathbf{d}}$.

To make the construction of the net simpler, we introduce another parameter. 
Given $\rho$ as in Proposition \ref{prop: matrix V}, we will chose $\d>0$ such that 
\begin{equation} \label{eq: delta}
   \d \le  \rho \quad \text{and }  \d^{-1} \in \N.
\end{equation}
The parameter $\d$ will be adjusted several times throughout the proof, but its value will remain independent of $n$.
\begin{lemma}[Net cardinality] \label{lem: net card}
	Let $\mathbf{d}=(d_1 \etc d_l)$ be a vector such that $d_j \in [r \sqrt{n},R]$ for all $j \in [l]$. 
	 Let $\d$ be as in \eqref{eq: delta}, and $\NN_{\mathbf{d}} \subset (\d \Z^n)^l$ be the set of all $l$-tuples of vectors $u_1 \etc u_l$ such that 
    \[
     \norm{u_j}_2 \in \left[\frac{1}{2} d_j, 4 d_j\right] \quad \text{for all }\ j \in [l]
    \]	 	
    and 
    \[
     \dist (u_j, \Z^n) \le 2 \rho \sqrt{n}.
    \]
    Then
    \[
      |\NN_{\mathbf{d}}|
      \le  \left( \frac{C \rho}{r \delta}\right)^{l n} \left( \prod_{j=1}^l \frac{d_j}{\sqrt{n}}\right)^n.
    \]
\end{lemma}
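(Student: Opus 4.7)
The plan is to count independently for each index $j \in [l]$, since the defining conditions of $\NN_{\mathbf{d}}$ decouple across the coordinates of the $l$-tuple. So it suffices to show that for each fixed $d_j \in [r\sqrt{n}, R]$, the number of $u_j \in \d\Z^n$ with $\|u_j\|_2 \le 4 d_j$ and $\dist(u_j, \Z^n) \le 2\rho\sqrt{n}$ is at most $(C\rho/(r\d))^n \cdot (d_j/\sqrt{n})^n$, and then multiply across $j$.

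For a fixed $j$ I would write each admissible $u_j$ as $u_j = x + y$, where $x \in \Z^n$ is a nearest integer point to $u_j$ and $\|y\|_2 \le 2\rho\sqrt{n}$, and then bound the number of possible $x$ and of $y$ separately. The integer center satisfies $\|x\|_2 \le \|u_j\|_2 + 2\rho\sqrt{n} \le 4 d_j + 2\rho\sqrt{n}$, so Lemma~\ref{lem: integer inside ball} gives at most
\[
   \left( 2 + \frac{C(4 d_j + 2\rho\sqrt{n})}{\sqrt{n}} \right)^n
\]
candidates for $x$. Since $d_j/\sqrt{n} \ge r$ and $\rho$ is bounded by an absolute constant, both the constant $2$ and the $\rho$-term can be absorbed into $C_1 d_j / (r\sqrt{n})$, leaving a bound of $(C_1 d_j/(r \sqrt{n}))^n$ on the number of centers $x$. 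For the offset $y$, rescaling by $\d^{-1}$ converts the count of points in $\d\Z^n \cap B(0, 2\rho\sqrt{n})$ into a count of $\Z^n$-points in $B(0, 2\rho\sqrt{n}/\d)$, which by Lemma~\ref{lem: integer inside ball} is at most $(2 + 2C\rho/\d)^n \le (C_2\rho/\d)^n$, where in the last step I use $\d \le \rho$ from \eqref{eq: delta} to absorb the additive $2$.

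Combining these two counts yields a per-coordinate bound of $(C\rho/(r\d))^n \cdot (d_j/\sqrt{n})^n$, and taking the product over $j \in [l]$ gives the claimed inequality. The statement is essentially a volumetric count and I do not expect any real obstacle; the only care required is bookkeeping in the regime $d_j/\sqrt{n} \in [r, R/\sqrt{n}]$, where $d_j/\sqrt{n}$ is bounded below by the constant $r$ but may be very large, so that the additive constants produced by Lemma~\ref{lem: integer inside ball} can be folded cleanly into the dominant $d_j/\sqrt{n}$ and $\rho/\d$ factors.
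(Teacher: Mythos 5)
Your proposal is correct and follows essentially the same route as the paper: the paper also factors the count over $j$, writes each admissible $u_j$ as an integer lattice point plus a $\d\Z^n$-offset of norm at most $2\rho\sqrt{n}$ (formalized there as a Minkowski sum $\MM_j+\MM$), and applies Lemma~\ref{lem: integer inside ball} twice with exactly the same absorptions ($d_j/\sqrt{n}\ge r$ and $\d\le\rho$). Your bookkeeping of the center radius $4d_j + 2\rho\sqrt{n}$ is in fact slightly more careful than the paper's stated $2d_j$, but this makes no difference since the discrepancy is absorbed into the constant $C$.
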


\begin{proof}
	  Let $\MM_j= \Z^n \cap 2 d_j B_2^n$.
    Taking into account that $d_j \ge r \sqrt{n}$, we use Lemma \ref{lem: integer inside ball} to conclude that
	\[
	|\MM_j|
	\le \left(2+ \frac{C d_j}{\sqrt{n}}\right)^n
	\le  \left( \frac{C'}{r}\right)^n \cdot  \left(\frac{d_j}{\sqrt{n}} \right)^n.
	\]

	Define the set $\MM$ by $\MM = \d  \, \Z^n \cap 2 \rho \sqrt{n} B_2^n$. 
	Similarly, Lemma \ref{lem: integer inside ball} yields
	\[
	|\MM| \le \left( \frac{C \rho}{\d } \right)^n.
	\]
	Set $\NN_j=\MM_j+\MM \subset \d \, \Z^n$.
	Here we used the assumption that $\d^{-1} \in \N$.
	Then by construction
	\[
	|\NN_j| 
	\le \left( \frac{C \rho }{ r \d }  \right)^n  \cdot \left( \frac{d_j}{  \sqrt{ n}} \right)^n.
	\]
	Set $\NN_{\mathbf{d}}=\prod_{j=1}^l \NN_j$.
	Multiplying the previous estimates, we obtain
	\[
	|\NN_{\mathbf{d}}| 
	\le  \left( \frac{C \rho}{r \d}\right)^{l n} \left( \prod_{j=1}^l \frac{d_j}{\sqrt{n}}\right)^n
	\]
	as required.	
\end{proof}

The next step is the central technical part of this section.
Our next task is to show that for any $(v_1 \etc v_l) \in W_{\mathbf{d}}$, there exists a sequence $(u_1 \etc u_l) \in \NN_{\mathbf{d}}$ which approximates it in various ways. As some of these approximations hold only for a randomly chosen point of $\NN_{\mathbf{d}}$, and we need all of them to hold simultaneously, we have to establish all of them at the same time.
This will be done by using random rounding as in the proof of Proposition \ref{prop: compressible}. 
The implementation of this method here is somewhat different since we have to control the least common denominator of the matrix $U$ formed by the vectors $u_1 \etc u_l$.

We will prove the following lemma.

\begin{lemma}[Approximation] \label{lem: approx}
	Let $k \le cn$.
	Let $\mathbf{d}=(d_1 \etc d_l) \in [r \sqrt{n}, R]^l$.
	Let $\d>0$ be a sufficiently small constant satisfying \eqref{eq: delta}.
	Let $B$ be an $(n-k) \times n$ matrix such that $\norm{B}_{\HS} \le 2Kn$.
	For any sequence $(v_1 \etc v_l) \in W_{\mathbf{d}} \cap \Ker(B)$, there exists a sequence $(u_1 \etc u_l) \in \NN_{\mathbf{d}}$ with the following properties 
	\begin{enumerate}
		\item \label{it: app1} $\norm{u_j-v_j}_{\infty} \le \d$ for all $j \in [l]$;
		\item \label{it: app2} Let $U$ and $V$ be $n \times l$ matrices with columns $u_1 \etc u_l$ and $v_1 \etc v_l$ respectively. Then 
		\[
		 \norm{U-V} \le C \d \sqrt{n}.
		\]
		\item \label{it: app3} the system $(u_1 \etc u_l)$ is $(1/4)$-orthogonal;
		\item \label{it: app35} $\Span(u_1 \etc u_l ) \cap S^{n-1} \subset \Incomp(\tau^2, \tau^4/2)$;
		\item \label{it: app4} $\dist(u_j, \Z^n) \le 2 \rho \sqrt{n}$ for all $j \in [n]$;
		\item \label{it: app5} Let $U$ be as in \eqref{it: app2}. Then
		\[
		 \dist(U \theta, \Z^n) > \frac{\rho}{2} \sqrt{n}
		\]
		for any $\theta \in \R^n$ satisfying 
		\[
		  \norm{\theta}_2 \le \frac{1}{20 \sqrt{l}}  \quad
		  \text{ and} \quad \norm{U \theta}_2 \ge 8 r \sqrt{n};
		\]
		\item \label{it: app6} $\norm{B u_j}_2 \le 2 K \d n$ for all $j \in [l]$.
	\end{enumerate}
\end{lemma}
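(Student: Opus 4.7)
The plan is to use the random rounding technique of Proposition~\ref{prop: compressible} on scale $\d$ and show, by a union bound, that all seven conditions can be satisfied simultaneously for some realization. For each $i \in [n]$ and $j \in [l]$ set $w_j(i) = \d \lfloor v_j(i)/\d \rfloor$, and define independent random variables $\e_{i,j}$ by
\begin{align*}
\P\bigl(\e_{i,j} = w_j(i) - v_j(i)\bigr) &= 1 - \tfrac{1}{\d}(v_j(i) - w_j(i)), \\
\P\bigl(\e_{i,j} = w_j(i) - v_j(i) + \d\bigr) &= \tfrac{1}{\d}(v_j(i) - w_j(i)).
\end{align*}
Set $u_j = v_j + \sum_{i=1}^n \e_{i,j} e_i$. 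Then $u_j \in \d \Z^n$, $\E \e_{i,j} = 0$, and $|\e_{i,j}| \le \d$ deterministically. The parameter $\d \in (0, \rho]$ with $\d^{-1} \in \N$ will be chosen small in terms of $r, \rho, \tau, K, l$ at the end.

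With this construction, \eqref{it: app1} and \eqref{it: app4} hold automatically: the former from $|\e_{i,j}| \le \d$, the latter from $\dist(u_j, \Z^n) \le \dist(v_j, \Z^n) + \norm{u_j - v_j}_2 \le \rho \sqrt{n} + \d \sqrt{n} \le 2 \rho \sqrt{n}$. For \eqref{it: app2}, the matrix $(U - V)/\d$ has independent centered entries bounded by $1$, so Lemma~\ref{lem: operator norm} applied to its transpose (an $l \times n$ matrix with $l \le n$) yields $\norm{U - V} \le C \d \sqrt{n}$ on an event of probability at least $1 - \exp(-cn)$. For \eqref{it: app6}, since $v_j \in \Ker(B)$ and the $\e_{i,j}$'s are independent across $i$,
\[
\E \norm{B u_j}_2^2 = \E \norm{B(u_j - v_j)}_2^2 = \sum_{i=1}^n \E \e_{i,j}^2 \norm{B e_i}_2^2 \le \d^2 \norm{B}_{\HS}^2 \le 4 K^2 \d^2 n^2,
\]
so Chebyshev's inequality combined with independence of these events across $j$ gives $\P(\forall j :\, \norm{B u_j}_2 \le 2K \d n) \ge 2^{-l}$, after a harmless tightening of constants.

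The remaining three conditions are deduced deterministically from \eqref{it: app2}, once $\d$ is small enough, using $\norm{v_j}_2 \ge d_j \ge r \sqrt{n}$. Condition \eqref{it: app3} follows by the perturbation argument used at the end of the proof of Proposition~\ref{prop: compressible}: the normalized column matrices of $U$ and $V$ differ in operator norm by $O(\d / r)$, which promotes $(1/8)$-almost orthogonality of $(v_j)$ to $(1/4)$-almost orthogonality of $(u_j)$. For \eqref{it: app35}, any unit vector $w = U\theta \in \Span(u_1\etc u_l)$ satisfies $\norm{\theta}_2 \le C/(r \sqrt{n})$ by the almost orthogonality just established, whence $\norm{V\theta - w}_2 \le C'\d/r$, and incompressibility of $\Span(v_1\etc v_l) \cap S^{n-1}$ with parameter $\tau^4$ transfers to $\Span(u_1\etc u_l) \cap S^{n-1}$ with parameter $\tau^4/2$. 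For \eqref{it: app5}, given admissible $\theta$,
\[
\norm{V\theta}_2 \ge \norm{U\theta}_2 - \norm{U-V}\,\norm{\theta}_2 \ge 8 r \sqrt{n} - \tfrac{C\d \sqrt{n}}{20 \sqrt{l}} \ge \tfrac{\tau}{8} \sqrt{n},
\]
so condition~\eqref{it: 4} of Proposition~\ref{prop: matrix V} applied to $V$ gives $\dist(V\theta, \Z^n) > \rho \sqrt{n}$, and the triangle inequality yields $\dist(U\theta, \Z^n) \ge \rho \sqrt{n} - \norm{U-V}\norm{\theta}_2 > (\rho/2) \sqrt{n}$.

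Combining the estimates, $\P(\eqref{it: app2} \text{ and } \eqref{it: app6}) \ge 2^{-l} - \exp(-cn) > 0$ in the regime $l \le k \le (\rho/2) \sqrt{n}$, so a deterministic realization of $(\e_{i,j})$ exists; the membership $(u_1 \etc u_l) \in \NN_{\mathbf{d}}$ is automatic since $\norm{u_j - v_j}_2 \le \d \sqrt{n} \ll d_j$ places each $\norm{u_j}_2$ in $[d_j/2, 4 d_j]$. The chief obstacle is condition \eqref{it: app5}: rather than establishing a fresh LCD-type minimality bound for $U$ probabilistically, we transfer the one known for $V$ across the error $\norm{U-V}\,\norm{\theta}_2$. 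For this error to remain a fixed fraction of the minimality gap $\rho \sqrt{n}$ uniformly in the continuum of directions $\theta$ with $\norm{\theta}_2 \le 1/(20\sqrt{l})$, the constant $\d$ must be chosen small in terms of $r, \rho$ and $l$, and the delicate part is ensuring that all seven conditions tolerate a single common choice of $\d$.
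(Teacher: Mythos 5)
Your proposal follows essentially the same approach as the paper: random rounding of each $v_j$ to the scaled lattice $\d\Z^n$ via centered Bernoulli perturbations $\e_{i,j}$, using Lemma~\ref{lem: operator norm} for \eqref{it: app2} (with probability $1-\exp(-cn)$), deducing \eqref{it: app3}, \eqref{it: app35}, \eqref{it: app5} deterministically from \eqref{it: app2} once $\d$ is small relative to $r,\rho,\tau$, obtaining \eqref{it: app6} via Chebyshev and independence (with probability $\ge 2^{-l}$), and concluding by the positivity of $2^{-l}-\exp(-cn)$. The only discrepancy is your variance estimate $\E\e_{i,j}^2\le\d^2$ where the sharper $\d^2/4$ is what makes Chebyshev yield $1/2$ at threshold $2K\d n$, but you correctly flag this as a harmless constant adjustment.
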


The conditions \eqref{it: app3}-\eqref{it: app5} are the same as \eqref{it: 15}-\eqref{it: 4} of Proposition \ref{prop: matrix V} up to a relaxation of some parameters.

\begin{proof}
	Let $(v_1 \etc v_l) \in W_{\mathbf{d}}$. 
	Choose $(v_1' \etc v_l') \in \d \Z^n$ be such that 
	\[
	v_j \in v_j' + \d [0,1]^n \quad \text{for all } j \in [l].
	\]
	Define independent random variables $\e_{i,j}, i \in [n], j \in [l]$ by setting
	\[
	\P (\e_{i,j} = v_j'(i)-v_j(i))= 1- \frac{v_j(i)-v_j'(i)}{\d} 
	\]
	and
	\[
	  \P (\e_{i,j} = v_j'(i)-v_j(i) +\d)= \frac{v_j(i)-v_j'(i)}{\d} .
	\]
	Then $|\e_{i,j}| \le \d$ and $\E \e_{i,j}=0$. 
	Consider a random point 
	\[
	 u_j = v_j + \sum_{i=1}^n \e_{i,j} e_i \in \d^{-1} \Z^n.
	\]
	Then $\E u_j=v_j$ and $\norm{u_j-v_j}_{\infty} \le \d$ for all $j \in [l]$ as in \eqref{it: app1}.
	Let us check that  $(u_1 \etc u_l) \in \NN_{\mathbf{d}}$ for any choice of $\e_{i,j}$.
	Indeed, for any $j \in [l]$,
	\[
	\norm{u_j-v_j}_2 \le \d \sqrt{n} \quad \text{and } \left( 1- \frac{\d}{r} \right) \norm{v_j}_2 \le \norm{u_j}_2 \le \left( 1+ \frac{\d}{r} \right) \norm{v_j}_2
	\]
	as $\norm{ v_j}_2  \ge r \sqrt{n}$ for all $j \in [l]$.
	This, in particular, implies that $\norm{u_j}_2 \in \left[\frac{1}{2} d_j, 4 d_j\right]$ for all $j \in [l]$ and any values of $\e_{i,j}$.

	Let $U$ and $V$ be the $n \times l$ matrices with columns $u_1 \etc u_l$ and $v_1 \etc v_l$ respectively.
	Then the matrix $U-V$ has independent entries $\e_{i,j}, \ i \in [n], j \in [l]$ which are centered and bounded by $\d$ in the absolute value.
	By Lemma \ref{lem: operator norm},
	\[
	\P(\norm{U-V} \ge C_{\ref{lem: operator norm}} \d \sqrt{n}) \le \exp(-c_{\ref{lem: operator norm}} n),
	\]
	and so condition \eqref{it: app2} holds with probability at least $1-\exp(-c_{\ref{lem: operator norm}} n)$.

	Let us check that condition \eqref{it: app3} follows from \eqref{it: app2}.
	Let $D_U$ be the diagonal matrix $D_U= \diag (\norm{u_1}_2 \etc \norm{u_l}_2)$, and define $D_V$ in a similar way.
	If $\norm{U-V} \le C_{\ref{lem: operator norm}} \d \sqrt{n}$, then by the $\left(\frac{1}{8}\right)$-almost orthogonality of $(v_1 \etc v_l)$, we get
		\begin{align*}
		\norm{U D_U^{-1}}
		&\le \norm{U D_V^{-1}} \cdot \norm{D_V D_U^{-1}}  \\
		&\le \big[  \norm{V D_V^{-1}} + \norm{U-V} \cdot \norm{D_V^{-1}}  \big] \cdot \norm{D_V D_U^{-1}}\\
		& \le \left[ \frac{9}{8} + C_{\ref{lem: operator norm}} \d \sqrt{n}\cdot \frac{1}{r \sqrt{n}}  \right] \cdot \left( 1 - \frac{\d}{r}\right)^{-1} 
		\le \frac{5}{4}
	\end{align*}
	if $\d \le c r$ for an appropriately small constant $c>0$.
    Similarly,
	\begin{align*}
		s_l(U D_U^{-1}) 
		&\ge s_l(U D_V^{-1}) \norm{D_U D_V^{-1}}^{-1}  \\
		&\ge \big[ s_l(V D_V^{-1}) - \norm{U-V} \cdot \norm{D_V^{-1}} \big] \cdot \norm{D_U D_V^{-1}}^{-1}  \\
		&\ge \left[ \frac{7}{8}  - C_{\ref{lem: operator norm}}  \frac{\d}{r} \right] \cdot \left( 1 + \frac{\d}{r}\right)^{-1} 
		\ge \frac{3}{4}
	\end{align*}
    confirming our claim.
    The last inequality above follows again by choosing $\d <cr$ with a sufficiently small $c>0$.
    
    Let us check that condition \eqref{it: app35} follows from \eqref{it: app2} and \eqref{it: app3}.
    Indeed, let $\theta \in \R^l$ be such that $\norm{U \theta}_2=1$.
    Since $\min(\norm{u_1}_2 \etc \norm{u_l}_2 ) \ge r \sqrt{n}$, and the system $u_1 \etc u_l$ is $\left(\frac{1}{4}\right)$-almost orthogonal, we have 
    \[
      \norm{\theta}_2 \le \big(s_l(U)\big)^{-1} \norm{U \theta}_2 \le
       \frac{4}{3} \cdot \frac{1}{r \sqrt{n}}.
    \] 
    At the same time,
    \[
    \norm{V \theta}_2 \ge \norm{U \theta}_2 - \norm{U - V} \cdot \norm{\theta}_2
    \ge 1 - C_{\ref{lem: operator norm}} \d \sqrt{n} \cdot  \frac{4}{3} \cdot \frac{1}{r \sqrt{n}}
    =1- C_{\ref{lem: operator norm}} \frac{4 \d}{3 r}.
    \]
    Take any $y \in \Sparse (\tau^2 n)$. Then
    \[
     \norm{V \theta -y}_2 
     \ge \left(1-C_{\ref{lem: operator norm}}\frac{4 \d}{3 r}\right) \cdot \norm{\frac{V \theta}{\norm{V \theta}_2} - \frac{y}{\norm{V \theta}_2}}_2
     \ge \left(1-C_{\ref{lem: operator norm}}\frac{4 \d}{3 r}\right) \cdot \tau^4
    \]
    since $\frac{V \theta}{\norm{V \theta}_2} \in \Incomp(\tau^2 n, \tau^4)$.
    Therefore,
    \[
      \norm{U \theta -y}_2 
      \ge \norm{V \theta -y}_2 - \norm{U-V} \cdot \norm{\theta}_2
      \ge  \left(1-C_{\ref{lem: operator norm}}\frac{4 \d}{3 r}\right) \cdot \tau^4 - C_{\ref{lem: operator norm}} \frac{4 \d}{3 r}
      \ge \frac{1}{2} \tau^4
    \]
    where the last inequality holds if $\d$ is appropriately adjusted depending on $r$ and $\tau$.
    This proves that if $\norm{U\theta}_2=1$ then $\dist(U \theta, \Sparse(\tau^2 n)) \ge \tau^4/2$, i.e., $U \theta \in \Incomp(\tau^ n, \tau^4/2)$. Thus \eqref{it: app35} is verified.
    
    Condition \eqref{it: app4} immediately follows from \eqref{it: app1} and the triangle inequality:
    \[
     \dist(u_j, \Z^n) \le \dist(v_j, \Z^n) + \d \sqrt{n} \le 2 \rho \sqrt{n}
    \]
    since $\d \le \rho$.
    
    Condition \eqref{it: app5} follows from \eqref{it: app2} and \eqref{it: app3}. Indeed, let $\theta$ be as in \eqref{it: app5}, and assume that 
    $\norm{U-V} \le C_{\ref{lem: operator norm}} \d \sqrt{n}$.
    Since both $(v_1 \etc v_l)$ and $(u_1 \etc u_l)$ are $\left(\frac{1}{4}\right)$-almost orthogonal,  and $\norm{u_j}_2 \ge \frac{1}{2} \norm{v_j}_2$,
    \[
    \norm{V \theta}_2^2 \ge \frac{1}{4} \sum_{j=1}^l \theta_j^2 \norm{v_j}_2^2 
    \ge  \frac{1}{16} \sum_{j=1}^l \theta_j^2 \norm{u_j}_2^2
    \ge \frac{1}{64} \norm{U \theta}_2^2 \ge  r^2 n.
    \]
    As $(v_1 \etc v_l) \in W_{\mathbf{d}}$, this implies that 
    \[
    \dist(V \theta, \Z^n) > \rho \sqrt{n}.
    \]

    Therefore,
    \begin{align*}
    	\dist (U \theta, \Z^n)
    	& \ge  \dist (V \theta, \Z^n) - \norm{(U-V)^\top \theta}_2 \\
    	&>   \rho  \sqrt{n} - \norm{U-V} \cdot \norm{\theta}_2
    	\ge   \rho \sqrt{n} - C_{\ref{lem: operator norm}} \d \sqrt{n} \cdot \frac{1}{20 \sqrt{l} } \\
    	&\ge  \frac{\rho}{2}  \sqrt{n},
    \end{align*}
    where we  adjust $\d$ again if necessary.   
	Thus $(u_1 \etc u_l)$ satisfy \eqref{it: app2}--\eqref{it: app5} with probability at least $1-\exp(-c_{\ref{lem: operator norm}} n)$.

	It remains to show that we can choose $(u_1 \etc u_l)$ satisfying \eqref{it: app6} at the same time.
	For any $j \in [l]$, we have 
	\[
	\E \norm{B (u_j-v_j)}_2^2 = \E \norm{\sum_{i=1}^n \e_{i,j} B e_i}_2^2
	= \sum_{i=1}^n \E \e_{i,j}^2 \norm{B e_i}_2^2 
	\le \frac{\d^2}{4} \norm{B}_{\HS}^2 \le K^2 \d^2  n^2.
	\]
	By Chebyshev's inequality,
	\[
	\P ( \norm{B (u_j-v_j)}_2 \le 2 K \d n ) \ge \frac{1}{2}.
	\]
	In view of independence of these events for different $j$,
	\[
	\P (\forall j \in [l] \  \norm{B (u_j-v_j)}_2 \le 2 K \d n ) \ge 2^{-l}.
	\] 
	As 
	\[
	  1-\exp(-c_{\ref{lem: operator norm}} n)+2^{-l}>1, 
	\]
	there is a realization $(u_1 \etc u_l) \in \NN_{\mathbf{d}}$ satisfying \eqref{it: app2}--\eqref{it: app5} for which
	\[
	  \norm{B u_j}_2= \norm{B (v_j-w_j)}_2  \le 2 K \d n
	\]
	 holds for all $j \in [l]$ simultaneously.
	This finishes the proof of the lemma.
\end{proof}

 Fix the value of $\d$ satisfying \eqref{eq: delta} such that Lemma \ref{lem: approx} holds for the rest of the proof.

  We will now use the small ball probability estimate of Theorem \ref{thm: SBP LCD} to show that the event $W_{\mathbf{d}} \cap \Ker(B) \neq \varnothing$ is unlikely.
  \begin{lemma} \label{lem: sbp on the net}
  	Let  $\mathbf{d}=(d_1 \etc d_l) \in [r \sqrt{n}, R]^l$ where $R,r$ are defined above.
  	Let $k \le \frac{\d}{20} \sqrt{n}$ and $\frac{k}{10} \le l \le k$. Then
  	\[
  	  \P \left(  W_{\mathbf{d}} \cap \Ker(B) \neq \varnothing \right)
  	  \le \exp (- 2l n).
  	\]
  \end{lemma}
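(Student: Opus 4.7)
The plan is to combine the random-rounding approximation of Lemma \ref{lem: approx} with a union bound over the net $\NN_{\mathbf{d}}$, controlling the individual probability via a small ball estimate based on the LCD. Condition on the event $\EE_{\HS}=\{\norm{B}_{\HS}\le 2Kn\}$, whose complement has probability at most $\exp(-cn^2)$ by Lemma \ref{lem: HS norm}. On $\EE_{\HS}$, Lemma \ref{lem: approx} converts any $(v_1 \etc v_l)\in W_{\mathbf{d}}\cap\Ker(B)$ into a tuple $(u_1 \etc u_l)\in\NN_{\mathbf{d}}$ satisfying the deterministic conditions \eqref{it: app3}--\eqref{it: app5} as well as $\norm{Bu_j}_2\le 2K\d n$ for all $j$. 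Hence
\[
  \P\bigl(W_{\mathbf{d}}\cap\Ker(B)\ne\varnothing\bigr)\le |\NN_{\mathbf{d}}|\cdot \sup_{(u_j)}\P\bigl(\norm{Bu_j}_2\le 2K\d n\ \forall j\bigr) + \exp(-cn^2),
\]
with the supremum over tuples in $\NN_{\mathbf{d}}$ obeying \eqref{it: app3}--\eqref{it: app5}.

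For a fixed such tuple, my first step is to show that the LCD of $U=[u_1\,\cdots\,u_l]$ satisfies $D_{L,\a}(U^\top)\ge 1/(20\sqrt{l})$. For $\theta\in\R^l$ with $\norm{\theta}_2\le 1/(20\sqrt{l})$, if $\norm{U\theta}_2\le 8r\sqrt{n}=\tau\sqrt{n}/2$ then \eqref{it: app35} and Lemma \ref{lem: LCD compressible} give $\dist(U\theta,\Z^n)\ge L\sqrt{\log_+(\a\norm{U\theta}_2/L)}$; otherwise \eqref{it: app5} gives $\dist(U\theta,\Z^n)>\rho\sqrt{n}/2$, which exceeds $L\sqrt{\log_+(\a\norm{U\theta}_2/L)}$ since $\norm{U\theta}_2\le\norm{U}\norm{\theta}_2\lesssim R$ and $\log R=\rho^2 n/(4L^2)$. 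Combined with Lemma \ref{lem: almost ort}'s bound $\det(U^\top U)^{1/2}\ge 4^{-l}\prod_j d_j$, Theorem \ref{thm: SBP LCD} yields a single-row small ball estimate, and tensorizing via Lemma \ref{lem: tenzorization} applied to $Y_i=\norm{U^\top\Row_i(B)^\top}_2^2$ (using the inclusion $\{\norm{Bu_j}_2\le 2K\d n\ \forall j\}\subset\{\sum_i Y_i\le 4K^2\d^2 n^2 l\}$) produces
\[
  \P\bigl(\norm{Bu_j}_2\le 2K\d n\ \forall j\bigr)\le \left(\frac{C_0\d^2 n}{(\prod_j d_j)^{2/l}}\right)^{l(n-k)/2}.
\]
Here the hypothesis $l\ge k/10$ keeps $L^2/l=k/(pl)\le 10/p$ bounded, and $k\le(\d/20)\sqrt{n}$ ensures the tensorization scale $\asymp\d^2 nl$ exceeds the required threshold $\asymp l^3$.

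Multiplying by $|\NN_{\mathbf{d}}|\le(C\rho/(r\d))^{ln}(\prod_j d_j)^n n^{-ln/2}$ from Lemma \ref{lem: net card}, the $\prod_j d_j$ powers reduce to $(\prod_j d_j)^k\le R^{lk}=\exp(l\rho^2 np/4)$, the powers of $\d$ collapse to $\d^{-lk}$, and the powers of $n$ to $n^{-lk/2}$. The bound $k\le\rho\sqrt{n}/2$ forces both $\d^{-lk}$ and $n^{-lk/2}$ to contribute only $o(ln)$ in the log, so the log of the product is dominated by $ln\bigl[\log(C\rho/r)+O(1)+\rho^2 p/4\bigr]$; choosing $\rho_0$ small makes this coefficient less than $-2$, yielding $\exp(-2ln)$ after absorbing $\exp(-cn^2)$. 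The main obstacle is this bookkeeping, specifically ensuring that the $\prod_j d_j$ factors from the net cardinality and the determinant bound cancel leaving a harmless residual that $\rho_0$ can absorb.
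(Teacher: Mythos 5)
Your proposal follows essentially the same route as the paper's proof: condition on the Hilbert--Schmidt event, pass to the net via Lemma~\ref{lem: approx}, establish the lower bound $D_{L,\a}(U^\top)\ge 1/(20\sqrt{l})$ by the same two-case split (using Lemma~\ref{lem: LCD compressible} when $\norm{U\theta}_2\le 8r\sqrt{n}$ and condition~\eqref{it: app5} otherwise), apply Theorem~\ref{thm: SBP LCD} with the determinant bound from Lemma~\ref{lem: almost ort}, tensorize via Lemma~\ref{lem: tenzorization}, and finally multiply by the net cardinality from Lemma~\ref{lem: net card} and absorb the residual into $\exp(-2ln)$ by choosing $\rho$ small. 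The bookkeeping you sketch ($d_j$ powers collapsing to $(\prod d_j)^k\le R^{lk}$, the $\d$ powers to $\d^{-lk}$, the role of $k\le(\d/20)\sqrt{n}$ and $l\ge k/10$) matches the paper's computation up to harmless constants, so this is the paper's argument and the plan is sound.
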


\begin{proof}

	Let $\NN_{\mathbf{d}}$ be the net constructed in Lemma \ref{lem: net card}.
	Let $\tilde{\NN_{\mathbf{d}} }$ be the set of all $(u_1 \etc u_l) \in \NN_{\mathbf{d}}$ which satisfy conditions \eqref{it: app3} -- \eqref{it: app5} of Lemma \ref{lem: approx}.
	Consider an $l$-tuple $u_1 \etc u_l \in \tilde{\NN_{\mathbf{d}} } $.
	Let $U$ be the $n \times l$ matrix with columns $u_1 \etc u_l$.

	To apply the Levy concentration estimate of Theorem \ref{thm: SBP LCD}, we have to bound the LCD of $U^\top$ from below.
	Let us show that
	\begin{equation}  \label{eq: lower LCD}
	D_{L,\a}(U^\top) \ge \frac{1}{20 \sqrt{l}}.
	\end{equation}
	Take $\theta \in \R^l$ such that $\norm{\theta}_2 \le \frac{1}{20 \sqrt{l}}$.
	Assume first that 
	\[
	  \norm{U \theta}_2 \le 8 r \sqrt{n} = \sqrt{\tau^2 n}/2.
	\]
	Recall that $L$ and $\a$ are defined as in \eqref{eq: def L}.
	Since the columns of $U$ satisfy \eqref{it: app35} of Lemma \ref{lem: approx}, applying Lemma \ref{lem: LCD compressible} yields
	\[
    	\dist(U \theta, \Z^n) \ge L \sqrt{\log_+ \frac{\a \norm{U \theta}_2}{L} }.
	\]
	
	Assume now that $\norm{U \theta}_2 > 8 r \sqrt{n}$.	
	By condition \eqref{it: app1} of Lemma \ref{lem: approx},
	\[
	 \norm{U}_{\HS} \le \sqrt{l} \max_{j \in [l]} \norm{u_j}_2 \le \sqrt{l} R.
	\]
	Hence,
   \[
   	L \sqrt{\log_+ \frac{\a \norm{U \theta}_2}{L}} 
	\le L  \sqrt{\log_+\frac{\norm{U}_{\HS}}{L}}
	\le L  \sqrt{\log_+ R}
	\le L \sqrt{\frac{\rho^2}{4} \cdot \frac{n}{L^2}} \le \frac{\rho}{2} \sqrt{n}.
	\]
	By condition \eqref{it: app5} of Lemma \ref{lem: approx},
	\[
	\dist(U \theta, \Z^n) > \frac{\rho}{2} \sqrt{n}
	\]
	whenever $\theta \in \R^n$ satisfies
	\[
	\norm{\theta}_2 \le \frac{1}{20 \sqrt{l}}  \quad
	\text{ and} \quad \norm{U \theta}_2 \ge 8 r \sqrt{n}.
	\]
	Combining these two cases, we see that any vector $\theta \in \R^l$ with 
	$	\norm{\theta}_2 \le \frac{1}{20 \sqrt{l}}$ satisfies
	\[
    	\dist(U \theta, \Z^n) \ge L \sqrt{\log_+ \frac{\a \norm{U \theta}_2}{L} }
	\]
	which proves \eqref{eq: lower LCD}.

	Using condition \eqref{it: app3} of Lemma \ref{lem: approx} and Lemma \ref{lem: almost ort}, we infer
	\[
	\det(U^\top U)^{1/2} \ge 4^{-l} \prod_{j=1}^l \norm{u_j}_2 \ge 8^{-l} \prod_{j=1}^l d_j.
	\]

	Let $i \in [n]$. Recall that $\Row_i(B) \in \R^n$ is a vector with i.i.d. random coordinates satisfying \eqref{eq: xi} and that $l \le k \le \frac{\d}{20 \sqrt{n}}$. 
	Combining this with \eqref{eq: SBP LCD} used with 
	\[
	t \ge \d \sqrt{n} \ge 20 l \ge \frac{\sqrt{l}}{D_{L,\a}(U^\top)}
	\]
	and recalling that $L=O(\sqrt{k})$ by \eqref{eq: def L} and $l \ge k/10$, we obtain
	\[
	\P \left(  \norm{U^\top (\Row_i(B))^\top}_2 \le t \sqrt{l }\right)  
	\le \frac{(CL/\sqrt{l})^l}{\det(U^\top U)^{1/2}} \left(t + \frac{\sqrt{l}}{D_{L,\a}(U^\top)} \right)^l
	\le  \frac{C^l}{\prod_{j=1}^l d_j} t^l.
	\]

	Denote
	\[
	Y_i = \frac{1}{l}\norm{U^\top (\Row_i(B))^\top}_2^2, \qquad
	M=  \frac{C^2}{\left( \prod_{j=1}^l d_j \right)^{2/l}}.
	\]
	Then we can rewrite the last inequality as
	\[
	\P(Y_i \le s) \le (Ms)^{l/2} \quad \text{for } s \ge s_0= \d^2 n.
	\]
	In view of Lemma \ref{lem: tenzorization} applied with $m=l/2$ and $t= 4K^2 s_0$ with $K$ from \eqref{eq: xi}, this yields
	\begin{align*}
		\P \left( \norm{Bu_j}_2 \le 2K \d n \text{ for all } j \in [l] \right)
		&\le  \P \left( \sum_{j=1}^l  \norm{Bu_j}_2^2 \le 4K^2 \d^2 l n^2 \right)  \\
		&= \P \left( \sum_{i=1}^{n-k}   \norm{U^\top (\Row_i(B))^\top}_2 ^2 \le  4 K^2 \d^2 l n^2 \right)  \\
		&= \P \left( \sum_{i=1}^{n-k} Y_i \le n \cdot 4K^2 \d^2  n  \right) 
		\le (C' M \d^2  n)^{(n-k) l/2}\\
		&=  (C'' \d)^{l(n-k)} \cdot \left( \prod_{j=1}^l \frac{\sqrt{ n}}{d_j}\right)^{n-k}.
	\end{align*}

	Since $\tilde{\NN_{\mathbf{d}} } \subset \NN_{\mathbf{d}}$,
	a combination of the  small ball probability estimate above and Lemma \ref{lem: net card}  gives
	\begin{align*}
		&\P \left( \exists (u_1 \etc u_l) \in \tilde{\NN_{\mathbf{d}} }: \ \norm{Bu_j}_2 \le  \d n, \ j \in [l] \right) \\
		&\le |\NN_{\mathbf{d}}| \cdot   (C'' \d)^{l (n-k)} \cdot \left( \prod_{j=1}^l \frac{\sqrt{n}}{d_j}\right)^{n-k} \\
		&\le  \left( \frac{C \rho}{r \d}\right)^{l n} \left( \prod_{j=1}^l \frac{d_j}{\sqrt{n}}\right)^n
		\cdot (C'' \d)^{l (n-k)} \cdot  \left( \prod_{j=1}^l \frac{\sqrt{n}}{d_j}\right)^{n-k} \\
		&=   \left( \frac{C' \rho}{r }\right)^{l n} \cdot \d^{-lk}  \left( \prod_{j=1}^l \frac{d_j}{\sqrt{n}}\right)^k.
	\end{align*}
	Recall that by  \eqref{eq: def r,R},
	\[
	 d_j \le R= \exp \left(\frac{\rho^2 n}{4 L^2}\right) \le  \exp \left(\frac{C \rho^2 n}{k}\right)   \quad \text{for all } j \in [l],
	\]
	where the last inequality follows from \eqref{eq: def L}.
	Therefore,
	\begin{align*}
	\P \left( \norm{Bu_j}_2 \le 2K \d n \  \text{ for all } j \in [l] \right)
	&\le   \left( \frac{C' \rho}{r }\right)^{l n} \cdot  \left(\frac{R}{\d \sqrt{n}}\right)^{lk}
	\le   \left( \frac{\tilde{C} \rho}{r } \exp(C \rho^2)\right)^{l n}  \\
	&\le \exp (-2ln)
\end{align*}	
	if $\rho <c r$ for a sufficiently small constant $c>0$.

	Notice that
	\begin{align*}
		\P \left(  W_{\mathbf{d}} \cap \Ker(B) \neq \varnothing \right)
		&\le \P \left(  W_{\mathbf{d}} \cap \Ker(B) \neq \varnothing \text{ and } \norm{B}_{\HS} \le 2 K n 
		\right) \\
		&+ \P \left( \norm{B}_{\HS} \ge 2 K n \right).
	\end{align*}
	In view of Lemma \ref{lem: HS norm}, the second term is smaller than $\exp(-cn^2)$ which means that we have to concentrate on the first one.
	
	Assume that the events $ W_{\mathbf{d}} \cap \Ker(B) \neq \varnothing \text{ and } \norm{B}_{\HS} \le  2 K n$ occur, and pick an $l$-tuple $(v_1 \etc v_l) \in  W_{\mathbf{d}} \cap \Ker(B)$.
	Choose an approximating $l$-tuple $(u_1 \etc u_l) \in \NN_{\mathbf{d}}$ as in Lemma \ref{lem: approx}. Then $(u_1 \etc u_l) \in \tilde{\NN_{\mathbf{d}}}$ and $\norm{B u_j}_2 \le 2 K \d n$ per condition \eqref{it: app6} of this lemma.
	The argument above shows that the probability of the event that such a tuple $(u_1 \etc u_l) \in \tilde{\NN_{\mathbf{d}}}$ exists is at most $\exp(-2ln)$.
	The lemma is proved.	
\end{proof}

Proposition \ref{prop: matrix V} follows from Lemma \ref{lem: sbp on the net} by taking the union bound over dyadic values of the coordinates of $\mathbf{d}$.
\begin{proof}[Proof of Proposition \ref{prop: matrix V}]
	Let $\EE_{\mathbf{d}}$ be the event that $ W_{\mathbf{d}} \cap \Ker(B) \neq \varnothing$. 
	 Then 
	\[
	\EE_l= \bigcup \EE_{\mathbf{d}},
	\]
	where the union is taken over all vectors $\mathbf{d}$ with dyadic coordinates: $d_j=2^{s_j}, \ s_j \in \N$ such that $2^{s_j} \in [r \sqrt{n}, R]$. Since there are at most 
	\[
	 \left[ \log \left( \frac{2R}{r \sqrt{n}}\right) \right]^l
	\le  \left( \frac{C \rho^2 n}{L^2}\right)^l
	\]
	terms in the union, Lemma \ref{lem: sbp on the net} yields
	\[
		\P(\EE_{\mathbf{d}}) \le \left( \frac{C \rho^2 n}{L^2}\right)^l  \exp \left( -2 l n \right)
		\le \exp \left( - l n \right)
	\]
	where we took into account that $L >1$. This finishes the proof of the proposition.
\end{proof}

\section{Rank of a random matrix} \label{sec: Rank}

 We will complete the proof of Theorem \ref{thm: rank} using the probability estimates of Propositions \ref{prop: compressible} and \ref{prop: matrix V}.
 These propositions show that the linear subspace spanned by the first $n-k$ columns of the matrix $A$ is unlikely to contain a large almost orthogonal system of vectors with a small or moderate least common denominator. 
 Applying Lemma \ref{lem: min config}, we will show that with high probability, this subspace contains a further subspace of a dimension proportional to $k$ which has no vectors with a subexponential least common denominator. 
 The next lemma shows that in such a typical situation, it is unlikely that the rank of the matrix $A$ is $n-k$ or smaller.

\begin{lemma}  \label{lem: large LCD}
	Let $A$ be an $n \times n$ random matrix whose entries are independent copies of a  random variable $\xi$ satisfying \eqref{eq: xi}.
	For $k< \sqrt{n}$ define
	\[
	\Omega_k=\Omega_k(\Col_1(A) \etc \Col_{n-k}(A))
	\]
	as the event that there exists a linear subspace $E \subset \big(\Span(\Col_1(A) \etc \Col_{n-k}(A)) \big)^\perp$ such that $\dim(E) \ge k/2$ and 
	\[
	D_{L, \a }(E) \ge \exp \left(  C \frac{n}{k} \right).
	\]
	Then
	\begin{align*}
	&\P \big( \Col_j(A) \in \Span (\Col_i(A), \ i \in [n-k]) \text{ for } j=n-k+1 \etc n \text{ and } \Omega_k \big) \\
	&\le \exp(-c' n k).
	\end{align*}
\end{lemma}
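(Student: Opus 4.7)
The plan is to condition on the first $n-k$ columns of $A$ and exploit the fact that the remaining columns are mutually independent and independent of everything determined by the first $n-k$ columns, including the event $\Omega_k$ itself and the corresponding subspace $E$. On the event $\Omega_k$, a measurable selection produces a subspace $E$ of dimension $m \ge k/2$ lying inside $\bigl(\Span(\Col_1(A) \etc \Col_{n-k}(A))\bigr)^\perp$ with $D_{L, \a}(E) \ge \exp(Cn/k)$. The key geometric observation is that $\Col_j(A) \in \Span(\Col_i(A), i \in [n-k])$ forces $\Col_j(A) \perp E$, that is, $P_E \Col_j(A) = 0$.

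Next I would apply Corollary \ref{cor: SBP proj} with $t = 0$ to each of the independent subgaussian vectors $\Col_j(A)$, $j = n-k+1 \etc n$. Since these columns are independent of $\Col_1(A) \etc \Col_{n-k}(A)$ (and hence of $E$), the conditional probability satisfies
\[
\P\bigl( P_E \Col_j(A) = 0 \,\big|\, \Col_1(A) \etc \Col_{n-k}(A) \bigr)
\le \left( \frac{CL}{\a\, D_{L,\a}(E)} \right)^m
\le \left( \frac{CL}{\a\, \exp(Cn/k)} \right)^{k/2}
\]
on $\Omega_k$. By independence across the $k$ columns indexed by $j \in \{n-k+1 \etc n\}$, the joint conditional probability is at most the $k$-th power of the above, namely
\[
\left( \frac{CL}{\a\, \exp(Cn/k)} \right)^{k^2/2}.
\]

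Taking logarithms and recalling that $L = \sqrt{k/p} = O(\sqrt k)$ and $\a = \tau^4/4$ is a fixed positive constant, the exponent is bounded above by
\[
-\tfrac{k^2}{2}\left[ \tfrac{Cn}{k} - O(\log k) \right]
= -\tfrac{C}{2}\, k n + O(k^2 \log k).
\]
Since $k \le c n^{1/2}$, the error term $O(k^2 \log k)$ is dominated by $kn$, so the bound simplifies to $\exp(-c' kn)$ provided the constant $C$ in the definition of $\Omega_k$ is chosen large enough. Integrating against $\mathbf{1}_{\Omega_k}$ then yields the stated estimate on the unconditional probability.

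No step appears to present a genuine obstacle: the argument is essentially a direct application of Corollary \ref{cor: SBP proj} combined with independence. The only point requiring mild care is the measurable selection of $E$ on $\Omega_k$, which can be handled in a standard way (e.g.\ by picking $E$ via a lexicographic rule on a countable index of candidate subspaces arising from a fixed dense family), and the choice of the constant $C$ in the LCD lower bound, which must be adjusted to absorb the factor $(CL/\a)^{k/2}$ and still leave a linear-in-$kn$ surplus in the exponent.
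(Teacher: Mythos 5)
Your argument is correct and follows essentially the same route as the paper: condition on the first $n-k$ columns to fix $E$, note that $\Col_j(A)\in\Span(\Col_i(A),i\in[n-k])$ forces $P_E\Col_j(A)=0$, apply Corollary~\ref{cor: SBP proj} with $t=0$ and the LCD lower bound from $\Omega_k$, and exploit independence across the last $k$ columns. One small point worth tightening: Corollary~\ref{cor: SBP proj} requires $L\ge\sqrt{m/p}$, and with $L=\sqrt{k/p}$ this forces $m\le k$, whereas $\dim E$ is only known to be at least $k/2$ and could exceed $k$; you should first pass to a $(k/2)$-dimensional subspace $E'\subset E$, observing that $D_{L,\a}(E')\ge D_{L,\a}(E)$ since the infimum defining the LCD runs over a smaller set, and then apply the corollary with $m=k/2$ exactly as the paper does.
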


\begin{proof}
	Assume that $\Omega_k$ occurs.
	The subspace $E$ can be selected in a measurable way with respect to the sigma-algebra generated by $\Col_1(A) \etc \Col_{n-k}(A)$.
	Therefore, conditioning on  $\Col_1(A) \etc \Col_{n-k}(A)$ fixes this subspace.
	Denote the orthogonal projection on the space $E$ by $P_E$.
	 Since $E$ is independent of $\Col_{n-k+1}(A) \etc \Col_n(A)$, and these columns are mutually independent as well, it is enough to prove that 
	\begin{align*}
	  &\P (\Col_j(A) \in \Span (\Col_i(A), \ i \in [n-k]) \text{ for } j=n-k+1 \etc n \mid E   )  \\
		& \le \P (\Col_j(A) \in E^\perp  \text{ for } j =n-k+1 \etc n \mid E   ) \\
		& = \big(\P (P_E \Col_n(A) =0 \mid E   ) \big)^k \\
		&\le \exp(-c n k),
	\end{align*}
	or
	\[
	\P (P_E \Col_n(A) = 0  \mid E   ) 
	\le \exp(-c n ).
	\]
	Using Corollary \ref{cor: SBP proj} with $m=k/2$ and $t=0$, we obtain
	\[
	\P (P_E \Col_n(A) = 0 \mid E  )
	\le C^m \left( \sqrt{m}  \exp \left( - C \frac{n}{k} \right) \right)^m
	\le \exp(-c n )
	\]
	as required.
\end{proof}

 With all ingredients in place, we are now ready to prove the main theorem. 
\begin{proof}[Proof of Theorem \ref{thm: rank}]
	Recall that it is enough to prove Theorem \ref{thm: rank} under the condition that the entries of $A$ are i.i.d. copies of a random variable satisfying \eqref{eq: xi}.
	 
	Assume that $\rank(A) \le n-k$.
	Then there exists a set $J \subset [n], \ |J|=n-k$ such that
	$\Col_j(A) \in \Span(\Col_i(A), \ i \in J)$ for all $j \in [n] \setminus J$.
	Since the number of such sets is
	\[
	 \binom{n}{k} \le \exp \left(k \log \left(\frac{e n}{k} \right) \right)
	 \ll \exp(-c k n), 
	\]
	 it is enough to show that
	 \[
	  \P \left(\Col_j(A) \in \Span(\Col_i(A), \ i \in J) \text{ for all } J \in [n] \setminus J \right)
	  \le \exp( - c kn)
	 \]
	 for a single set $J$.
	 As the probability above is the same for all such sets $J$, without loss of generality assume that $J=[n-k]$.

	Consider the $(n-k) \times n$ matrix $B$ with rows $\Row_j(B)=(\Col_j(A))^\top$ for $j \in [n-k]$.
	Let $E_0= \Ker (B)$ and denote by $P_{E_0}$ the orthogonal projection onto $E_0$.
	Then the condition $\Col_j(A) \in \Span(\Col_i(A), \ i \in [n-k])$ reads $P_{E_0} \Col_j(A)=0$.
	
	Let $\tau$ be the constant appearing in Proposition \ref{prop: compressible}, and denote
	\[
	  W_0= \Comp (\tau^2n, \tau^4).
	\]
	Set $l=k/4$.
	Lemma \ref{lem: min config} asserts that at least one of the events described in \eqref{it: ort system} and \eqref{it: subspace} of this lemma occurs. Denote these events $\EE_{\ref{lem: min config}}^{\eqref{it: ort system}}$ and $\EE_{\ref{lem: min config}}^{\eqref{it: subspace}}$ respectively.
	In view of Proposition \ref{prop: compressible},
	\[
	  \P (\EE_{\ref{lem: min config}}^{\eqref{it: ort system}})
	  \le \exp \left( -\frac{k}{4} n\right).
	\]
	Here we used only condition \eqref{it: 13} in Lemma \ref{lem: min config} ignoring condition \eqref{it: 14}.

	Assume now that $\EE_{\ref{lem: min config}}^{\eqref{it: subspace}}$ occurs and consider the subspace $F \subset E_0, \ \dim (F)= \frac{3}{4}k$ such that $F \cap  \Comp (\tau^2n, \tau^4) = \varnothing$.
	Let $\rho$ be the constant appearing in Proposition \ref{prop: matrix V}, and let $L$ be as in \eqref{eq: def L}.
	Set 
	\[
	 W_1= \left\{ v \in F: \ \frac{\tau}{8} \sqrt{n} \le \norm{v}_2 \le \exp \left( \frac{\rho^2 n}{4L^2} \right) \text{ and } \dist (v, \Z^n) \le \rho \sqrt{n} \right\}
	\]
	Applying Lemma \ref{lem: min config} to  $W_1$ and $l= \frac{k}{4}$, we again conclude that one of the following events occurs:
	\begin{enumerate}
		\item there exist vectors $v_1 \etc v_{k/4} \in F \cap W_1$ such that 
		\begin{enumerate}
			\item the $(k/4)$-tuple $(v_1 \etc v_{k/4})$ is $\left(\frac{1}{8}\right)$-almost orthogonal and
			\item for any $\theta \in \R^{k/4}$ with 
			\[
			  \norm{\theta}_2 \le \frac{1}{20 \sqrt{k/4}},
			\]
			$\sum_{i=1}^{k/4} \theta_i v_i \notin W_1$
		\end{enumerate}
    \vskip 0.1in
	or 
    \vskip 0.1in
	   \item there is a subspace $\tilde{F} \subset F$ with $\dim (\tilde{F}) = \frac{k}{2}$ such that $\tilde{F} \cap W_1 = \varnothing$.
	\end{enumerate}
    Denote these events $\VV_{\ref{lem: min config}}^{\eqref{it: ort system}}$ and $\VV_{\ref{lem: min config}}^{\eqref{it: subspace}}$ respectively.
    In view of Proposition \ref{prop: matrix V}, 
    \[
      \P (\VV_{\ref{lem: min config}}^{\eqref{it: ort system}})
      \le \exp \left( -\frac{k}{4} n\right).
    \]
    
    Assume now that the event $\VV_{\ref{lem: min config}}^{\eqref{it: subspace}}$ occurs.
    We claim that in this case,
    \[
      D_{L, \a}(\tilde{F}) \ge R: = \exp \left( \frac{\rho^2 n}{4L^2} \right).
    \]
    The proof is similar to the argument used in the proof of Lemma \ref{lem: sbp on the net}.
    Let $S: \R^{k/2} \to \R^n$ be an isometric embedding such that $S \R^{k/2} = \tilde{F}$. 
    Then $D_{L,\a} (\tilde{F})= D_{L,\a} (S^\top)$.
    Let $\theta \in \R^{k/2}$ be a vector such that 
    \[
      \dist(S \theta, \Z^n) < L \sqrt{\log_+ \frac{\a \norm{\theta}_2}{L}}.
    \]
    Since
    \[
    S \R^{k/2} \cap S^{n-1} \subset F \cap S^{n-1} \subset \Incomp(\tau^2 n, \tau^4),
    \]
    Lemma \ref{lem: LCD compressible} applied with $U=S$ and $s=\tau^2$ yields
    \[
     \norm{\theta}_2 \ge \tau \sqrt{n}/2.
    \]
    On the other hand, if $\norm{\theta}_2 \le R$, then 
    \[
       L \sqrt{\log_+ \frac{\a \norm{\theta}_2}{L}}
       \le \rho \sqrt{n},
    \]
    and therefore $\dist(S \theta, \Z^n) < \rho \sqrt{n}$.
    Since $\tilde{F}=S \R^{k/2} \cap W_1 = \varnothing$, this implies that 
    \[
      \norm{\theta}_2 = \norm{S \theta}_2 > R =  \exp \left( \frac{\rho^2 n}{4L^2} \right),
    \] 
    thus proving our claim and checking the assumption of Lemma \ref{lem: large LCD}.

    Finally, 
	 \begin{align*}
       &\P \left(\Col_j(A) \in \Span(\Col_i(A), \ i \in [n-k]) \text{ for } j=n-k+1 \etc n \right) \\
       &\le 2 \exp( - \frac{k}{4} n) \\
        &+
       \P( \Col_j(A) \in \Span(\Col_i(A), \ i \in [n-k])  \text{ for } j=n-k+1 \etc n  \text{ and }\VV_{\ref{lem: min config}}^{\eqref{it: subspace}}).
     \end{align*}
  Lemma \ref{lem: large LCD} shows that the last probability does not exceed $\exp(-c' (k/2) n)$. The proof is complete.
\end{proof}

 After the theorem is proved, we can derive an application to the question of Feige and Lellouche.
 \begin{lemma}  \label{lem: submatrices}
 	Let $q \in (0,1)$, and $m,n \in \N$ be numbers such that
 	\[
 	 m \le n \le \exp \left(C'_q \sqrt{m} \right).
 	\] 	
 	Let $A$ be an $m \times n$ matrix with independent Bernoulli$(q)$ entries.
 	Then with probability at least $1-\exp(- m \log n)$ all $m \times m$ submatrices of $A$ have rank greater than $m - C_q \log n$.

 	Furthermore, if $n \ge m^2$, then with probability at least $1-\exp(-cm)$, there exists an $m \times m$ submatrix $A|_S$ of $A$ with $|S|=m$ such that
 	\[
 	 \rank(A|_S) \le m - c_q \log n.
 	\]
 	The constants $C_q>c_q>0$  above can depend on $q$.
 \end{lemma}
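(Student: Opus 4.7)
The proof splits into an upper bound (first statement) and a matching lower bound (second statement).

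For the first statement, apply Theorem~\ref{thm: rank} to each of the $\binom{n}{m}$ submatrices $A|_S$ with $|S|=m$, taking the rank-deficit parameter to be $k = C_q \log n$. Since $\mathrm{Bernoulli}(q)$ is subgaussian and non-constant, the hypothesis of Theorem~\ref{thm: rank} reduces to $k \le c\sqrt{m}$, which is secured by the standing assumption $n \le \exp(C'_q \sqrt{m})$ once $C_q C'_q \le c$. The theorem yields $\P(\rank(A|_S) \le m-k) \le \exp(-c'C_q m \log n)$, and a union bound over $\binom{n}{m} \le \exp(m \log n)$ column sets gives $\exp((1 - c'C_q) m \log n)$. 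Choosing $C_q$ so that $c' C_q \ge 2$ produces the desired tail bound $\exp(-m \log n)$.

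The second statement is the interesting one: in the range $m^2 \le n \le \exp(C'_q\sqrt{m})$ one cannot produce rank deficiency from duplicated or zero columns, as the expected count of such structures decays to zero. Instead I would trap many columns of $A$ into a specific low-codimension subspace of $\R^m$. Set $k = c_q \log n$ for a small constant $c_q>0$ depending only on $q$, and define
\[
V = \{x \in \R^m : x_{2i-1} = x_{2i} \text{ for all } i \in [k]\},
\]
a subspace of codimension exactly $k$ (well defined because $2k \le 2 c_q C'_q \sqrt{m} \le m$ for large $m$). The events $\{\Col_j(A) \in V\}$ are mutually independent across $j\in[n]$, each of probability $\beta^k$ where $\beta = q^2+(1-q)^2 \in (0,1)$, so if $N_V$ counts the columns of $A$ lying in $V$, then $N_V \sim \mathrm{Binomial}(n, \beta^k)$ with mean $\E N_V = n^{1-c_q \log(1/\beta)}$. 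Tuning $c_q$ so that $c_q \log(1/\beta) \le 1/4$ makes the mean at least $n^{3/4} \ge m^{3/2}$ under $n \ge m^2$, and a Chernoff bound then yields $N_V \ge m$ with probability at least $1 - \exp(-c m)$. On this event, any $m$-element subset $S$ of $\{j : \Col_j(A) \in V\}$ produces an $m \times m$ submatrix whose column space sits inside $V$, hence of rank at most $\dim V = m - c_q \log n$.

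The main obstacle is purely conceptual and lies in the second part: duplicates and similar low-complexity sources of rank deficiency are too rare in this range of $n$, so one must find a mechanism whose per-column probability decays only polynomially in $n$ while still reducing the ambient dimension by $\Theta(\log n)$. The pair-coincidence subspace $V$ above achieves exactly this trade-off, after which the rest of the argument is a routine second-moment/Chernoff calculation.
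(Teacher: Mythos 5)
Your proof is correct and takes essentially the same approach as the paper: the first part matches the paper's argument exactly (Theorem~\ref{thm: rank} with $k=C_q\log n$ plus a union bound over the $\binom{n}{m}$ column sets), and for the second part the paper likewise traps $\Omega(m)$ columns in a codimension-$k$ structure with per-column probability polynomial in $n$ and invokes Chernoff. The only cosmetic difference is the choice of trap: the paper collects columns whose first $k$ entries all equal $1$ (per-column probability $q^k$, forcing $k$ identical rows and hence rank $\le m-k+1$), whereas your pair-coincidence subspace $V$ (per-column probability $(q^2+(1-q)^2)^k$) directly bounds the rank by $\dim V = m-k$ with no off-by-one.
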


\begin{proof}
	 The entries of $A$ are i.i.d. subgaussian random variables,  so Theorem \ref{thm: rank} applies to an $m \times m$ submatrix of $A$ as long as $k \le c \sqrt{m}$.
	 In view of the assumption of the lemma, the last inequality holds if we take $k=C_q \log n$.
	 Combining  Theorem \ref{thm: rank} with the union bound, we obtain
	 \begin{align*}
	  &	\P \left( \exists S \subset [n]: \ |S|=m \text{ and } \rank(A|_S) \le n- C_q \log n \right) \\
	  & \le \binom{n}{m} \exp (-c' m \cdot C_q \log n)
	  \le \exp \left(m \log \left(\frac{e n}{m}\right) - c' m \cdot C_q \log n \right) \\
	  & \le  \exp (- m   \log n)
\end{align*}
  if $C_q$ is chosen sufficiently large.
  
  To prove the second part of the lemma, take $k<m$ and define a random subset $J \subset [n]$ by
  \[
    J=\{j \in [n]: \ a_{1,j} = \cdots =a_{k,j}=1\}.
  \]
  Then for any $j \in [n]$,
  \[
   \P(j \in J)=q^k,
  \]
  and these events are independent for different $j \in [n]$.
  Take $k=c_q \log n$ and choose  $c_q$ so that $n q^k \ge 10 m$.
  Using Chernoff's inequality, we obtain
  \[
    \P(|J| \ge m)=1-\exp(-cm).
  \]
  On the other hand, $\rank (A|_J)\le n-k$ since this matrix contains $k$ identical rows.
  The lemma is proved.
\end{proof}

\subsection*{Acknowledgments}
This work was performed when the author held  an Erna and Jakob Michael Visiting
Professorship at the Department of Mathematics at the Weizmann Institute of Science. The author thanks the Weizmann Institute for its hospitality. He is especially grateful to Ofer Zeitouni for bringing this problem to his attention and numerous helpful discussions.





\end{document}